\newcommand{\vertiii}[1]{{\left\vert\kern-0.25ex\left\vert\kern-0.25ex\left\vert #1 
    \right\vert\kern-0.25ex\right\vert\kern-0.25ex\right\vert}}
\newcommand{\norm}[1]{\left\lVert#1\right\rVert}
\newcommand{\RR}{\mathbb{R}}
\newcommand{\ZZ}{\mathbb{Z}}
\newcommand{\NN}{\mathbb{N}}
\newcommand{\expec}{\mathbb{E}}
\newcommand{\prob}{\mathbb{P}}
\newcommand{\ind}{\mathbb{1}}
\newcommand{\PP}{\mathbb{P}}
\theoremstyle{plain}
\newtheorem{theorem}{Theorem}[section]
\newtheorem*{theorem*}{Theorem}
\newtheorem{prop}[theorem]{Proposition}
\newtheorem{lemma}[theorem]{Lemma}
\newtheorem{mydef}[theorem]{Definition}
\begin{document}

\title{A good universal weight for nonconventional ergodic averages in norm}

\author{Idris Assani}
\address{Department of Mathematics, The University of North Carolina at Chapel Hill, 
Chapel Hill, NC 27599}
\email{assani@math.unc.edu}
\urladdr{http://www.unc.edu/math/Faculty/assani/} 

\author{Ryo Moore}
\address{Department of Mathematics, The University of North Carolina at Chapel Hill, 
Chapel Hill, NC 27599}
\email{ryom@live.unc.edu}
\urladdr{http://ryom.web.unc.edu} 

\begin{abstract}
We will show that the sequence appearing in the double recurrence theorem is a good universal weight for the Furstenberg averages. That is, given a system $(X, \mathcal{F}, \mu, T)$ and bounded functions $f_1, f_2 \in L^\infty(\mu)$, there exists a set of full-measure $X_{f_1, f_2}$ in $X$ that is independent of integers $a$ and $b$ and a positive integer $k$ such that for all $x \in X_{f_1, f_2}$ and for every other measure-preserving system $(Y, \mathcal{G}, \nu, S)$, and each bounded and measurable function $g_1, \ldots, g_k \in L^\infty(\nu)$, the averages 
\[ \frac{1}{N} \sum_{n=1}^N f_1(T^{an}x)f_2(T^{bn}x)g_1 \circ S^n g_2 \circ S^{2n} \cdots g_k \circ S^{kn} \]
converge in $L^2(\nu)$.
\end{abstract}

\maketitle

\section{Introduction}
\subsection{Background}
\subsubsection{Good universal weights}
In some literatures (e.g. \cite[Definitions 3.1-3.3]{AssaniWWET}), the sequence $(a_n)$ is called a \textit{good universal weight for the pointwise ergodic theorem} if for any probability measure preserving system $(Y, \mathcal{G}, \nu, S)$ and any $g \in L^\infty(\nu)$, the averages
\begin{equation}\label{classical} \frac{1}{N} \sum_{n=0}^{N-1} a_n g(S^ny) \end{equation}
converge for $\nu$-a.e. $y \in Y$. Similarly, the sequence $(a_n)$ is called a\textit{ good universal weight for the mean ergodic theorem} if the averages in $(\ref{classical})$ converge in $L^2(\nu)$.

{In this paper, we will extend these classical notions of good universal weights to discuss the case where the sequence $(g \circ S^n)_n$ in $(\ref{classical})$ is replaced by other sequences of bounded and measurable functions $(X_n)_n$.
\begin{mydef}
	We say $(X_n)_n$ is a \textbf{process} if for all nonnegative integers $n \geq 0$, $X_n$ is a bounded and measurable function on some probability measure space $(\Omega, \mathcal{S}, \PP)$.
\end{mydef}
For instance, a sequence of bounded and measurable functions $(X_n)_n = (g \circ S^n)_n$ for any $g \in L^\infty(\nu)$ on any probability measure-preserving system $(Y, \mathcal{G}, \nu, S)$ is a process. Another process $(X_n)_n$ of our interest is a product of multiple functions each iterated by different powers of a measure-preserving transformation, such as
\[ X_n(y) = g_1 (S^ny) g_2  (S^{2n}y) \cdots  g_k (S^{kn}y), \]
for any positive integer $k$, where $g_1, g_2, \ldots, g_k \in L^\infty(\nu)$ on any measure-preserving system $(Y, \mathcal{G}, \nu, S)$.}
\begin{mydef}\label{guweight}
	We denote by
	\[ M_1 = \left\{ (a_n) : \sup_N \frac{1}{N} \sum_{n=1}^N |a_n| < \infty \right\}. \]
	\begin{itemize} 
		\item We say a sequence $(a_n) \in M_1$ is a \textbf{good universal weight for $(X_n)_n$ (a.e.) pointwise} if for any probability measure space $(\Omega, \mathcal{S}, \prob)$ for which the process $(X_n)_n$ is defined, the averages
		\[\frac{1}{N} \sum_{n=1}^N a_n X_n(\omega) \]
		converge for $\prob$-a.e. $\omega \in \Omega$.
		\item We say a sequence $(a_n) \in M_1$ is a \textbf{good universal weight for $(X_n)_n$ in norm} if for any probability measure space $(\Omega, \mathcal{S}, \prob)$ for which the process $(X_n)_n$ is defined, the averages
		\[\frac{1}{N} \sum_{n=1}^N a_n X_n(\omega) \]
		converge in $L^2(\prob)$
	\end{itemize}
\end{mydef}
{For example, if $(a_n)_n$ is a good universal weight for the process $(X_n)_n = (g \circ S^n)_n$ pointwise (resp. in norm) for any $g \in L^\infty(\nu)$, where $(Y, \mathcal{G}, \nu, S)$ is any probability measure-preserving system, then $(a_n)_n$ is a good universal weight for the pointwise ergodic theorem (resp. the mean ergodic theorem) in the classical sense.}
\subsubsection{History of the return times theorem}
{The studies of the return times theorem have shown that we can randomly generate good universal weights. The basic principle of the return times theorem that has been initially studied by A. Brunel in his Ph.D. thesis in 1966 \cite{BrunelThesis} is as follows: Given a process $X_n(\omega) $ converging in average (in norm or pointwise) and the characteristic function of a measurable set with positive measure, $\mathbb{1}_A$,  do we still have the convergence of the averages along the subsequence given by the return times of $T^nx$ to the set $A$?  In other words, is the sequence $(\mathbb{1}_A (T^nx))_n$ a good universal weight (in norm or pointwise)  for the averages of $\mathbb{1}_A(T^nx) X_n(\omega)$? In 1969, A. Brunel and M. Keane answered this question positively for a particular class of dynamical systems for both pointwise and norm convergence \cite{BrunelKeane69}. Krengel's book highlights some of the generalization of their work \cite{Krengel}.}

One of the important results in ergodic theory is the proof of return times theorem by J. Bourgain \cite{BourgainRT}, which was later simplified by J. Bourgain, H. Furstenberg, Y. Katznelson, and D. Ornstein (a.k.a. the "BFKO" argument) \cite{BFKO}. This result strengthens Birkhoff's pointwise ergodic theorem and generalizes the above-mentioned results on return times.

\begin{theorem}[Bourgain's Return Times Theorem]
	Let $(X, \mathcal{F}, \mu, T)$ be a probability measure-preserving system and $f \in L^\infty(\mu)$. Then there exists a set $X_f \subset X$ of full measure such that for any other probability measure-preserving system $(Y, \mathcal{G}, \nu, S)$ and any $g \in L^\infty(\nu)$,
	\[ \frac{1}{N} \sum_{n=1}^N f(T^nx) g(S^ny) \]
	converges $\nu$-almost everywhere for all $x \in X_f$. 
\end{theorem}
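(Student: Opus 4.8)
The plan is to follow the argument of Bourgain, Furstenberg, Katznelson, and Ornstein \cite{BFKO}. First I would reduce to the case of ergodic $T$ by the ergodic decomposition, and rescale so that $\|f\|_\infty \le 1$. Writing $\mathcal{K} = \mathcal{K}_T$ for the Kronecker factor of $(X,\mathcal{F},\mu,T)$, I would decompose $f = \mathbb{E}(f\mid\mathcal{K}) + f^\perp$ with $f^\perp \perp L^2(\mathcal{K})$; since the averages are linear in $f$ and a finite intersection of full-measure sets is again full measure, it is enough to handle the two summands separately and set $X_f = X_{\mathbb{E}(f\mid\mathcal{K})}\cap X_{f^\perp}$.

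For the Kronecker component I would approximate $\mathbb{E}(f\mid\mathcal{K})$ in $L^2(\mu)$ by finite linear combinations $\sum_{j\le J} c_j\phi_j$ of eigenfunctions, $\phi_j\circ T = e^{2\pi i\theta_j}\phi_j$. For fixed $J$ one has, for $\mu$-a.e.\ $x$ and all $n$, $\big(\sum_{j\le J} c_j\phi_j\big)(T^nx) = \sum_{j\le J} c_j\phi_j(x)\,e^{2\pi i n\theta_j}$, so the weighted average breaks into finitely many averages $\frac1N\sum_{n\le N} e^{2\pi i n\theta_j}g(S^ny)$, each of which converges $\nu$-a.e.\ by the Wiener--Wintner ergodic theorem (which supplies one full-measure subset of $Y$ valid simultaneously for all frequencies). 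For the tail $r_J = \mathbb{E}(f\mid\mathcal{K}) - \sum_{j\le J} c_j\phi_j$, the maximal ergodic inequality applied to $|r_J|^2$ gives $\sup_N\frac1N\sum_{n\le N}|r_J(T^n\cdot)|^2\to 0$ in $\mu$-measure as $J\to\infty$, and since $\big|\frac1N\sum_{n\le N} r_J(T^nx)g(S^ny)\big|\le\big(\frac1N\sum_{n\le N}|r_J(T^nx)|^2\big)^{1/2}$ this controls the tail uniformly in $N$, $y$, and the target system; a diagonal argument in $J$ completes this case.

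The core is the orthogonal component, where I would prove: if $f^\perp \perp L^2(\mathcal{K})$, there is a full-measure $X_{f^\perp}$ such that for every system $(Y,\mathcal{G},\nu,S)$ and every $g\in L^\infty(\nu)$, $\frac1N\sum_{n\le N} f^\perp(T^nx)g(S^ny)\to 0$ for $\nu$-a.e.\ $y$ and all $x\in X_{f^\perp}$. Three ingredients go into this. A maximal inequality for the two-parameter averages, as in \cite{BFKO}, reduces the pointwise statement to an averaged one, say convergence to $0$ of $\|\frac1N\sum_{n\le N}f^\perp(T^n x)g(S^n\cdot)\|_{L^2(\nu)}$. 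The van der Corput inequality then converts this into control of the Cesàro averages over $h$ of $\frac1N\sum_{n\le N} f^\perp(T^{n+h}x)\overline{f^\perp(T^nx)}\,g(S^{n+h}y)\overline{g(S^ny)}$. Finally---the decisive point---I would use that by Birkhoff $\frac1N\sum_{n\le N} f^\perp(T^{n+h}x)\overline{f^\perp(T^nx)}\to\langle f^\perp\circ T^h, f^\perp\rangle$, a quantity independent of $x$, together with the spectral characterization $\frac1H\sum_{h\le H}|\langle f^\perp\circ T^h, f^\perp\rangle|^2\to 0$ of $f^\perp\perp\mathcal{K}$, to beat the trivial bound on the $g$-correlations; to run this simultaneously over all targets one replaces the arbitrary $(Y,\mathcal{G},\nu,S,g)$ by a universal model (take $Y$ a shift space and $g$ a coordinate function), so that ``all systems'' becomes ``all shift-invariant measures'', a weak-$\ast$ compact set.

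The hard part will be precisely this last step: arranging the van der Corput/spectral estimate so that the exceptional null set in $x$ can be chosen independently of the infinite-dimensional family of possible $(Y,\mathcal{G},\nu,S,g)$, rather than one target at a time. This is where the universality reduction and a careful bookkeeping of null sets are needed, and it is the technical heart of \cite{BFKO}. The remaining steps---the ergodic-decomposition reduction, the trigonometric-polynomial approximation in the Kronecker factor, and deducing a.e.\ convergence from the maximal inequality together with norm convergence---are routine.
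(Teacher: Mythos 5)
This statement is Bourgain's Return Times Theorem, which the paper only quotes as background with references to \cite{BourgainRT} and \cite{BFKO}; it gives no proof of its own, so your sketch has to be judged on its own terms. Your skeleton is the right one and matches the BFKO strategy: reduce to ergodic $T$, split $f$ over the Kronecker factor $\mathcal{K}_T$, treat the compact part by approximating with eigenfunctions (the Wiener--Wintner theorem supplying the $Y$-side null set, which is allowed to depend on $x$, $g$ and the system) and controlling the tail by Birkhoff applied to $|r_J|^2$ over a countable family of approximants, and isolate the orthogonal part as the heart of the matter, with the universal shift-model reduction to handle ``all systems'' at once.

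The genuine gap is in the orthogonal half, and it is twofold. First, the asserted reduction ``a maximal inequality for the two-parameter averages reduces the pointwise statement to convergence to $0$ of $\|\frac1N\sum_{n\le N} f^\perp(T^nx)g(S^n\cdot)\|_{L^2(\nu)}$'' is not a valid step: a maximal inequality together with $L^2(\nu)$-norm convergence does not yield $\nu$-a.e.\ convergence. The Banach principle only reduces the problem to a dense class of $g$ for which a.e.\ convergence is already established, and for $g$ orthogonal to the Kronecker factor of $S$ there is no such easy class --- producing \emph{pointwise} (not norm) convergence on the $Y$ side for a fixed admissible weight is precisely where \cite{BFKO} does its real work, via oscillation estimates along suitably chosen scales rather than a norm-to-pointwise transfer. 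Even the eigenfunction part of $g$ is not free: you need $\frac1N\sum_{n\le N} f^\perp(T^nx)e^{2\pi i n\theta}\to 0$ for \emph{all} $\theta$ on a single full-measure set of $x$, i.e.\ a uniform Wiener--Wintner statement for $f^\perp\perp\mathcal{K}_T$, which is itself a nontrivial theorem. Second, the decisive construction --- extracting, for $\mu$-a.e.\ $x$, a finitary self-correlation property of the sequence $(f^\perp(T^nx))_n$ and then proving that \emph{any} bounded sequence with that property is a good weight for a.e.\ convergence in every target system, with the null set in $x$ chosen before the target is known --- is only named in your last paragraph, not carried out. As written, the proposal is an accurate roadmap to the BFKO proof rather than a proof: the two steps you defer are exactly the ones that make the theorem hard.
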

While the set of full-measure $X_f$ depends on the function $f$ and the transformation $T$, it is independent of every other ergodic system. {In terms of Definition $\ref{guweight}$, Bourgain has shown that for $\mu$-a.e. $x \in X$, the sequence $a_n = f(T^nx)$ is a good universal weight for $(X_n)_n = (g \circ S^n)_n$ pointwise, where $g \in L^\infty(\nu)$ for any measure-preserving system $(Y, \mathcal{G}, \nu, S)$.}

\subsubsection{Extensions of the return times theorem}
{Much of the background, historical development, and current status of the return times can be found in the survey paper prepared by the first author and K. Presser \cite{AssaniPresserSurvey}. Here, we will focus on discussing some of the developments on the return times theorem regarding mixing of multiple recurrence and multi-term return times problems. Some new results that appeared since the emergence of the survey paper are mentioned as well.}

Since the result of Bourgain emerged, the return times theorem has been extended in multiple direction. One way is to find a new universal weight in which the return-times averages converge. For instance, the first author shows in Proposition 5.3 of \cite{AssaniWWET} that if $(X, \mathcal{F}, \mu, T)$ is a weakly-mixing, standard uniquely ergodic system with Lebesgue spectrum, and $f \in \mathcal{C}(X)$, then $(f(T^nx))$ is a good universal weight for the pointwise ergodic theorem \textit{for all} $x \in X$. Recently, P. Zorin-Kranich announced the extension of Bourgain's return times theorem by showing that the double recurrence sequence is a good universal weight for the pointwise ergodic theorem for $\mu$-a.e. $x \in X$ \cite{ZK_DRT}.

The return times theorem has also been extended to averages with more than two terms. One example of such is the multiterm return times theorem that was obtained by D. Rudolph in 1998 \cite{RudolphMTRT}, which answers one of the questions raised by the first author in 1991. Rudolph's proof utilized the method of joinings and fully generic sequences, while the method of factor decomposition was absent, which was one of the key tools in the BFKO argument of the return times theorem. Later, the first author and K. Presser identified characteristic factors for the multiterm return times theorem \cite{AssaniPresser03, AssaniPresser}. Furthermore, P. Zorin-Kranich provided a different proof of the multiterm return times theorem based on these factor structures, and showed that multiterm return times averages can be extended to Wiener-Wintner type averages with nilsequences \cite{ZK_MRT}. Also, T. Eisner \cite{EisnerLinear} showed the convergence of Wiener-Wintner type averages for multiterm return times theorem with linear sequences.

In another direction, the return times theorem has been extended by mixing weights from the a.e. multiple recurrence and the multiterm return times theorem. This idea was introduced by the first author in 1998, in which he proved the following:
\begin{theorem}[{\cite[Theorem 3]{AssaniMRT_Weakly}}]\label{MRT_Weakly}
	Let $(X, \mathcal{F}, \mu, T)$ be a weakly mixing dynamical system such that for all positive integers $H$, for all $f_1, f_2, \ldots f_H \in L^\infty(\mu)$, for all $(b_1, b_2, \ldots, b_H)\in \ZZ^H$ where $b_i$ distinct and not equal to zero, the sequence
	\[ \frac{1}{N} \sum_{n=1}^N \left(\prod_{i=1}^H f_i(T^{b_in}x)\right) \text{ converges a.e. to } \prod_{i=1}^H \int f_i d\mu\, . \]
	Then there exists a set of full measure $X'$ for any other weakly mixing system $(Y_1, \mathcal{G}_1, S_1, \nu_1)$ and any $g_1 \in L^\infty(\nu_1)$, there exists a set of full measure $Y_{g_1}$ in $Y_1$ such that if $y_1 \in Y_{g_1}$, then . . . for any other weakly mixing system $(Y_{k-1}, \mathcal{G}_{k-1}, S_{k-1}, \nu_{k-1})$ and any $g_{k-1} \in L^\infty(\nu_{k-1})$ there exists a set of full measure $Y_{g_{k-1}}$ in $Y_{k-1}$ such that if $y_{k-1} \in Y_{g_{k-1}}$, then for any other weakly mixing system $(Y_k, \mathcal{G}_k, S_k, \nu_k)$, the sequence
	\[ {\xi}_n(x, y_1, y_2, \ldots, y_{k}) = \left(\prod_{i=1}^H f_i(T^{b_in}x)\right) \left(\prod_{j=1}^{k} g_j(S^n_j y_j)\right) \]
	is a good universal weight for the pointwise ergodic theorem for $\nu_k$-a.e. $y_k \in Y_k$.
\end{theorem}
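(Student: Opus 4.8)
The plan is to prove Theorem~\ref{MRT_Weakly} by reducing, by multilinearity, to a single ``mean-zero'' weight, showing that such a weight is uniformly good in the sense of Wiener--Wintner, and concluding by the BFKO transfer argument. Writing $f_i = \hat f_i + \int f_i\,d\mu$ with $\int\hat f_i\,d\mu=0$, and $g_j = \hat g_j + \int g_j\,d\nu_j$ with $\int\hat g_j\,d\nu_j=0$, and expanding $\xi_n$ into $2^{H+k}$ products, the summand in which every factor is a constant is a scalar multiple of the constant sequence $1$, which is a good universal weight for the pointwise ergodic theorem by Birkhoff's theorem. Hence it suffices to prove that every other summand — which, up to a multiplicative constant, has the form $\eta_n = \big(\prod_{i\in A}\hat f_i(T^{b_in}x)\big)\big(\prod_{j\in B}\hat g_j(S_j^ny_j)\big)$ with $A\cup B\neq\emptyset$ — is, for $(x,y_1,\dots)$ outside null sets chosen in the stated order, a good universal weight \emph{with limit $0$}: for every m.p.s.\ $(Z,\mathcal H,\rho,R)$ and every $\psi\in L^\infty(\rho)$, $\frac1N\sum_{n=1}^N\eta_n\,\psi(R^nz)\to0$ for $\rho$-a.e.\ $z$. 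The proof of this runs by induction on the number $|B|$ of return-times systems present.

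\emph{Uniform Wiener--Wintner for $\eta$.} The core is the estimate $\lim_N\sup_{\theta}\big|\frac1N\sum_{n=1}^N\eta_n\,e(n\theta)\big|=0$ for a.e.\ $(x,y_1,\dots)$. By van der Corput's inequality, $\big|\frac1N\sum_n\eta_n e(n\theta)\big|^2$ is controlled, up to an error made small by fixing the shift range $L$ before letting $N\to\infty$, by the Cesàro average over $h\le L$ of $e(h\theta)$ times $\frac1N\sum_n\eta_{n+h}\overline{\eta_n}=\frac1N\sum_n\big(\prod_{i\in A}[(\hat f_i\circ T^{b_ih})\overline{\hat f_i}](T^{b_in}x)\big)\big(\prod_{j\in B}[(\hat g_j\circ S_j^h)\overline{\hat g_j}](S_j^ny_j)\big)$. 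For each fixed $h$ this average converges a.e.: ordering the $Y$-systems and peeling them off one at a time, each peeling is an instance of the statement with one fewer system — furnished by the induction, applied after expanding the peeled weight into its own mean-zero components plus a constant — while the innermost, purely $X$-part is handled by the hypothesis on $(X,T)$ (or the mean ergodic theorem if $|A|\le1$); the mean-zero components contribute $0$, and the surviving limit is $\prod_{i\in A}\langle\hat f_i\circ T^{b_ih},\hat f_i\rangle\,\prod_{j\in B}\langle\hat g_j\circ S_j^h,\hat g_j\rangle$. Since the $b_i$ are nonzero and $T,S_1,\dots$ are weakly mixing, every power $T^{b_i}$ and every $S_j$ is weakly mixing, so each correlation above tends to $0$ in density in $h$; hence the Cesàro average over $h\le L$ of the modulus of the whole product tends to $0$ as $L\to\infty$, and since $|e(h\theta)|=1$ the bound is uniform in $\theta$, which proves the claim.

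\emph{BFKO transfer and conclusion.} Granting uniform Wiener--Wintner for $\eta$, fix a m.p.s.\ $(Z,R)$ and $\psi\in L^\infty(\rho)$. Expanding $\psi$ through the spectral theorem for $R$ and using that $\sup_\theta\big|\frac1N\sum_n\eta_n e(n\theta)\big|$ controls the $L^2(\rho)$-norm in $z$ of $\frac1N\sum_n\eta_n\,\psi(R^nz)$ yields convergence to $0$ in $L^2(\rho)$; the upgrade to $\rho$-a.e.\ convergence is the BFKO argument — restrict to a lacunary sequence of times, bound the intervening oscillation by the maximal ergodic theorem, and apply Borel--Cantelli — as in \cite{BFKO}. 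This closes the induction, and the full statement of Theorem~\ref{MRT_Weakly} then follows by expanding $\xi_n$ once more into mean-zero terms (good universal weights with limit $0$) plus the constant term.

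\emph{Expected main obstacle.} The delicate point is the bookkeeping in the van der Corput step. Van der Corput fixes the quantifier order ``choose $\varepsilon$, then $L$, then let $N\to\infty$'', whereas the a.e.\ convergence of the inner averages — from the hypothesis on $(X,T)$ and from the peeling — is granted only for each fixed $h$, and must be made uniform over $h\le L$ and valid simultaneously for a full-measure set of $(x,y_1,\dots)$ chosen in the prescribed order, with the product formula for the inner limit and its uniformity in $\theta$ surviving the interchange. One must also verify that the auxiliary systems entering the descent — the powers $T^{b_i}$ and $S_j$, and the powers $T^{qb_i}$ should rational frequencies be treated separately — inherit weak mixing and, where needed, the standing hypothesis on $(X,T)$. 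Once these points are settled, the maximal-function step of the transfer is a black box.
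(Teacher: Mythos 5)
You should first note that this paper does not prove Theorem~\ref{MRT_Weakly} at all: it is quoted as background from \cite{AssaniMRT_Weakly}, so there is no in-paper argument to compare yours against, and I can only judge the proposal on its own terms. Your skeleton is sensible and close in spirit to how such results are actually proved: decompose into mean-zero parts plus constants, induct on the number of auxiliary systems by peeling them off, and show that for each fixed $h$ the autocorrelations $\frac1N\sum_n \eta_{n+h}\overline{\eta_n}$ converge a.e.\ to $\prod_{i\in A}\langle \hat f_i\circ T^{b_ih},\hat f_i\rangle\prod_{j\in B}\langle \hat g_j\circ S_j^h,\hat g_j\rangle$ (the $X$-part by the standing hypothesis, the $Y$-parts by the induction), a quantity that vanishes in density in $h$ by weak mixing. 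That part, together with the quantifier bookkeeping you flag (full-measure sets chosen in the nested order, simultaneously for all $h$ via countable intersections), is workable.

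The genuine gap is the final transfer step. Uniform Wiener--Wintner for the weight gives, via the spectral theorem, convergence to $0$ of $\frac1N\sum_n\eta_n\,\psi(R^nz)$ in $L^2(\rho)$, but the upgrade you describe --- choose a lacunary sequence of times, control the intervening oscillation by the maximal ergodic theorem, apply Borel--Cantelli --- does not work and is not the argument of \cite{BFKO}. The weighted maximal function is dominated by the unweighted one, so there is a maximal inequality, but the $L^2$ convergence comes with no rate; you cannot pick times that are simultaneously sparse enough for Borel--Cantelli and of ratio tending to $1$ so that the oscillation term is small. (If this scheme worked, norm convergence plus a maximal inequality would always yield a.e.\ convergence, and the return times theorem itself would be an easy corollary of the uniform Wiener--Wintner theorem.) What uniform Wiener--Wintner actually buys pointwise is convergence for $\psi$ in the Kronecker factor of $(Z,R)$; for $\psi$ orthogonal to it one needs the BFKO orthogonality criterion proper: a bounded sequence whose autocorrelations $c_h$ exist for every $h$ and satisfy $\frac1H\sum_{h\le H}|c_h|\to0$ is a good universal weight for the pointwise ergodic theorem with limit $0$, proved by the finitary genericity/contradiction argument of \cite{BFKO}, not by a lacunary interpolation. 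The good news is that your correlation computation establishes exactly the hypotheses of that criterion, so the repair is to apply it directly to $\eta$ (making the spectral/Wiener--Wintner detour unnecessary); as written, however, the step that delivers the a.e.\ conclusion --- the whole point of the theorem, as opposed to its norm version --- is missing.
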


{For instance, if $(X, \mathcal{F}, \mu, T)$ is a weakly mixing system for which the restriction of $T$ to its Pinsker algebra has singular spectrum, then the hypothesis of the theorem above holds. This result was proven by the first author in 1998 \cite{Assani_MultRecWeaklyMix}.}

{In terms of Definition $\ref{guweight}$, Theorem $\ref{MRT_Weakly}$ says that for $k = 1$, there exists a set of full-measure $Y_{g_1} \subset Y_1$ such that for all $y_1 \in Y_1$, the sequence $(\prod_{i=1}^H f_i(T^{b_in}x))_n$ is a good universal weight for $\mu$-a.e. $x \in X$ for the process $X_n(z) = X_n[y_1, g_1, S_1](z) = g_1(S_1^ny_1)h(R^nz)$ pointwise, for any measure-preserving system $(Z, \mathcal{Z}, \eta, R)$ and a function $h \in L^\infty(\eta)$.}

{In 2009, B. Host and B. Kra showed in \cite{HostKraUniformity} that given an ergodic system $(X, \mathcal{F}, \mu, T)$ and $f \in L^\infty(\mu)$, the sequence $(f(T^nx))$ is a good universal weight for $\mu$-a.e. $x \in X$ for the convergence in $L^2$-norm of the Furstenberg averages, i.e. they have shown that there exists a set of full-measure $X' \subset X$ such that for any $x \in X'$ and any other measure-preserving system $(Y, \mathcal{G}, \nu, S)$ with functions $g_1, \ldots, g_k \in L^\infty(\nu)$, the averages
\begin{equation}\label{HostKra}
\frac{1}{N} \sum_{n=1}^N f(T^nx) \prod_{i=1}^k g_i \circ S^{in},
\end{equation}
converge in $L^2(\nu)$. In particular, if $f = \ind_A$ for some measurable set $A \in \mathcal{F}$, then they have shown that the averages of the sequence $(\prod_{i=1}^k g_i (S^{in}y))_n$ along the subsequence of the return times of $T^nx$ to the set $A$ converge in $L^2(\nu)$-norm. {In the language of Definition $\ref{guweight}$, for $\mu$-a.e. $x \in X$, the sequence $(f(T^nx))$ is a \textbf{good universal weight for $(X_n)_n$ in norm}, where $(X_n)_n$ is a process of the form 
\begin{equation}\label{theProcesses} (X_n)_n = \left( \prod_{i=1}^k g_i \circ S^{in}\right)_n \text{ for any } g_1, \ldots, g_k \in L^\infty(\nu) \text{ on any m.p.s. } (Y, \mathcal{G}, \nu, S), \text{ for any }k \geq 1.  \end{equation}
}This result extends their earlier work in \cite{HostKraNEA}, where they proved the result for $f = \ind_X$. To show this result, they used the machinery of nilsequences {(see \cite{BHK_nilseq,HostKraUniformity} for more background on nilsequences)}; they showed that if a bounded sequence $(a_n)_n \in \ell^\infty$ has a property that the Cesaro averages of $a_n b_n$ converge for any $k$-step nilsequence $(b_n)_n$, then $(a_n)_n$ is a good universal weight for $k$-term multiple recurrent averages in the $L^2$-norm. Then the convergence of the averages in $(\ref{HostKra})$ follows from the fact that there exists a set of full-measure $X'$ so that for any $x \in X'$ and any nilsequence $(b_n)_n$, the Cesaro averages of $f(T^nx)b_n$ converge; this is referred to as the generalized Wiener-Wintner theorem. Later, in the work of T. Eisner and P. Zorin-Kranich, the generalized Wiener-Wintner theorem was extended to any measure-preserving system (with not necessarily ergodic transformation) with uniform counterpart, and used this to extend the result to a case with polynomial actions \cite{EZK2013}. 

\subsection{The main theorem}
In this paper, we will prove the following:
\begin{theorem}[The main result]\label{mainResult}
Let $(X, \mathcal{F}, \mu, T)$ be a probability measure-preserving system, with functions $f_1, f_2 \in L^\infty(\mu)$. Then for $\mu$-a.e. $x \in X$, the sequence $u_n = (f_1(T^{an}x)f_2(T^{bn}x))_n$ is a good universal weight for a $k$-term Furstenberg averages in norm for any positive integer $k$. More precisely, there exists a set of full-measure $X_{f_1, f_2} \subset X$ such that for any $x \in X_{f_1, f_2}$, $a, b \in \ZZ$ and any positive integer $k \geq 1$, and any other probability measure-preserving system $(Y, \mathcal{G}, \nu, S)$ with $g_1, \ldots, g_k \in L^\infty(\nu)$, the averages
	\[ \frac{1}{N} \sum_{n=1}^N f_1(T^{an}x)f_2(T^{bn}x) \prod_{i=1}^k g_i \circ S^{in} \]
converge in $L^2(\nu)$.
\end{theorem}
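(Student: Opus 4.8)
The strategy I would follow mirrors the Host–Kra approach to the weighted multiple ergodic theorem (the result surrounding $(\ref{HostKra})$), with the return-times weight $f(T^nx)$ replaced by the double-recurrence weight $u_n = f_1(T^{an}x)f_2(T^{bn}x)$. The key abstract input is the following reduction, essentially due to Host and Kra: if a bounded sequence $(a_n)_n$ has the property that the Cesàro averages $\frac1N\sum_{n=1}^N a_n b_n$ converge for every $k$-step nilsequence $(b_n)_n$, then $(a_n)_n$ is a good universal weight for the $k$-term Furstenberg averages in $L^2$. So it suffices to prove that, for $\mu$-a.e. $x$, the sequence $u_n = f_1(T^{an}x)f_2(T^{bn}x)$ is a \emph{universally good weight against nilsequences} — i.e., for every nilsequence $(b_n)_n$, the averages $\frac1N\sum_{n=1}^N f_1(T^{an}x)f_2(T^{bn}x)\,b_n$ converge — with an exceptional null set $X_{f_1,f_2}$ that does not depend on $a$, $b$, $k$, or on the nilsequence. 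This is the double-recurrence analogue of the generalized Wiener–Wintner theorem.

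First I would set up the reduction to nilsequences carefully, quoting the Host–Kra structure theorem so that the problem becomes: show that $\frac1N\sum_{n=1}^N f_1(T^{an}x)f_2(T^{bn}x)\,\phi(\gamma^n e)$ converges for every nilsystem $(G/\Gamma,\gamma)$ and every continuous $\phi$. A standard argument lets one replace a general nilsequence by a finite-complexity one and then, via a density/diagonal argument over a countable dense family of nilpotent data, it is enough to handle one fixed nilsequence while keeping the null set uniform; the countability is what makes "independent of $a,b,k$" work, since $a,b$ range over $\ZZ$ and $k$ over $\NN$. Then I would invoke (or reprove) the \emph{double recurrence Wiener–Wintner theorem}: for $\mu$-a.e. $x$, the averages $\frac1N\sum_{n=1}^N f_1(T^{an}x)f_2(T^{bn}x)e^{2\pi i n t}$ converge for every $t$, and more generally against polynomial/nilsequence phases — this is exactly the content of the Zorin-Kranich type result $\cite{ZK_DRT}$ and of the pointwise double recurrence theorem, upgraded to carry a one-parameter (and then nilsequence) weight uniformly. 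The bridge from the linear-phase Wiener–Wintner statement to the full nilsequence statement is the van der Corput / generalized polynomial machinery: nilsequences are built from the linear ones by iterated "integration", and a van der Corput estimate reduces the $k$-step case to the $(k-1)$-step case applied to shifted products, much as in $\cite{HostKraUniformity}$ and $\cite{EZK2013}$.

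The main obstacle, and the place where genuine work is needed beyond citing, is establishing the double recurrence Wiener–Wintner theorem with a null set uniform over the weight and proving the requisite \emph{uniformity} (Gowers–Host–Kra seminorm control) for the double-recurrence averages $\frac1N\sum_{n=1}^N f_1(T^{an}x)f_2(T^{bn}x)b_n$. Concretely, one must show that if $f_1$ or $f_2$ has small enough $U^s$-seminorm (equivalently, is "generic" with respect to the appropriate characteristic factor for the nonconventional average along $an,bn$), then the weighted averages are small, uniformly in the nilsequence of bounded complexity. For the weak-mixing / small-seminorm part this follows from a van der Corput estimate and the known characteristic factors for double recurrence (Conze–Lesigne / two-step nilsystems); for the structured part one passes to the Kronecker-type or two-step nilfactor of $(X,T)$ and reduces to an explicit computation on a nilsystem, where the weighted average becomes an average of a product of nilsequences and hence converges by the equidistribution theory on nilmanifolds. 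Assembling these — the seminorm dichotomy, the van der Corput induction on the degree of the nilsequence, and the diagonalization to get one universal null set — is the crux; once the double recurrence Wiener–Wintner statement is in hand, the passage to $k$-term Furstenberg averages is exactly the Host–Kra reduction and carries over verbatim.
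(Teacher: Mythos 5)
Your overall plan is coherent, but it takes a route the paper deliberately avoids, and its central ingredient is assumed rather than proved. The reduction you quote from \cite{HostKraUniformity} (a bounded sequence that is a good weight against all $k$-step nilsequences is a good universal weight for the $k$-term Furstenberg averages in $L^2$) is correct, so your argument stands or falls with the statement you defer to the end: that for $\mu$-a.e.\ $x$, with a null set independent of $a$, $b$, $k$ and the nilsequence, the averages $\frac1N\sum_{n=1}^N f_1(T^{an}x)f_2(T^{bn}x)\,b_n$ converge for \emph{every} nilsequence $(b_n)$. That is a nilsequence Wiener--Wintner theorem for double recurrence, a strictly stronger input than anything this paper uses; at the time it existed only as announcements (\cite{AssaniDRnil}, \cite{ZK_NilseqWW}, cf.\ the Remarks), and your sketch of it underestimates the difficulty. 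The passage from linear phases $e^{2\pi int}$ to $k$-step nilsequences is not a routine van der Corput "integration" step; the characteristic factors for the weighted averages against $k$-step nilsequences are the higher-order Host--Kra--Ziegler factors $\mathcal{Z}_{k+1}(T)$, not the Conze--Lesigne factor, and the seminorm estimate must be uniform over nilsequences of bounded complexity, which is exactly the hard content of those announced papers (the linear and polynomial cases are \cite{WWDR}, \cite{WWDR_poly}; \cite{ZK_DRT} concerns the pointwise weight question, not the Wiener--Wintner statement). So as written there is a genuine gap: you have reduced the theorem to a harder, unproven theorem.

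For comparison, the paper never needs weights against nilsequences. It decomposes $f_1,f_2$ with respect to $\mathcal{Z}_{k+1}(T)$ and the $g_j$ with respect to $\mathcal{Z}_k(S)$, and handles three cases: when some $f_i\in\mathcal{Z}_{k+1}(T)^\perp$, a van der Corput plus spectral theorem induction (Lemma \ref{dynEstMultLemma}) bounds the $L^2(\nu)$-norm of the averages by iterated Wiener--Wintner double recurrence suprema, which are controlled via the \emph{linear-phase} inequality (\ref{estimate}) from \cite{WWDR} and Fatou, yielding $\min_i\vertiii{f_i}_{k+2}$; when some $g_j\in\mathcal{Z}_k(S)^\perp$, a direct induction (Proposition \ref{sequence}) bounds the averages by $\vertiii{g_j}_{k+1}$ for an arbitrary bounded weight, explicitly avoiding nilsequence machinery; and when all functions lie in their factors, everything reduces to continuous functions on a product nilmanifold and Leibman's pointwise theorem (Theorem \ref{LeibmanThm}). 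This is why the paper only needs the already-published exponential Wiener--Wintner double recurrence result, whereas your route front-loads the entire difficulty into the nilsequence Wiener--Wintner statement; if you want to pursue your approach, that statement is what you must actually prove.
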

In particular, if $f_1 = \ind_A$ and $f_2 = \ind_B$ for some measurable sets $A, B \in \mathcal{F}$ with positive measures, then we see that the averages of the sequence $(\prod_{i=1}^k g_i (S^{in}y))_n$ along the subsequence of the return times of $T^{an}x$ to the set $A$ and $T^{bn}x$ to the set $B$ converge in $L^2(\nu)$-norm. This theorem mixes the weights from the a.e. double recurrent convergence result and the norm convergence of the multiple recurrent theorem. {In terms of Definition $\ref{guweight}$, we show that for $\mu$-a.e. $x \in X$, the sequence $(f_1(T^{an}x)f_2(T^{bn}x))_n$ is a good universal weight for the process $(X_n)_n$ of the form in $(\ref{theProcesses})$ in norm.} Note that this theorem generalizes the result obtained by B. Host and B. Kra, since if $a = 1$ and $f_2 = \ind_X$, then the averages in the theorem become the averages seen in $(\ref{HostKra})$.

The Cesaro averages of the sequence $(f_1(T^{an}x)f_2(T^{bn}x))_n$ is known to converge for $\mu$-a.e. $x \in X$ by Bourgain's double recurrence theorem \cite{BourgainDR}. It was recently extended to a Wiener-Wintner result \cite{WWDR}, {and further to a polynomial Wiener-Wintner result \cite{WWDR_poly}}. Note that the case $k = 1$ of the main result follows immediately from this Wiener-Wintner result. In fact, we will show that this is the key step required to establish the "base case" of our inductive argument in the proof.

In the proof, we will assume that the systems $(X, \mathcal{F}, \mu, T)$ and $(Y, \mathcal{G}, \nu, S)$ are ergodic, since we can apply the ergodic decomposition to show that the result holds for general measure-preserving systems. To prove the main result for $k \geq 2$, we will first decompose the functions $f_1$ and $f_2$ into an appropriate characteristic factor of $(X, \mathcal{F}, \mu, T)$, and treat the cases when either $f_1$ or $f_2$ belongs to the orthogonal complement of this factor, or the case when both of them belong to the factor. For the first case, we will prove it by induction on $k$. We will show that the case $k = 2$ follows from the fact that the theorem holds for the case $k = 1$; to do so, we will show that the $L^2(\nu)$-norm limit of the averages can be controlled by the limit of the double recurrence Wiener-Wintner averages. We will also show that the case $k = 3$ follows from the case $k = 2$ to demonstrate the inductive step necessary to prove this for any $k \in \NN$. For the second case, we will decompose the functions $g_1, \ldots, g_k$ into an appropriate characteristic factor, and treat the sub-cases when either one of $g_1, \ldots, g_k$ belongs to the orthogonal complement of this factor, and when all of them belong to the factor separately. For the first sub-case, we will control the norm limit of the averages with a seminorm that characterizes this factor, and uses this to show that the norm averages converge to $0$. For the second sub-case, we will use the structure of nilmanifolds and Leibman's convergence result \cite{Leibman} to prove the claim.

The factors we use are the Host-Kra-Ziegler factors \cite{HostKraNEA, Ziegler}. Throughout this paper, we denote $\mathcal{Z}_k(T)$ to be the $k$-th Host-Kra-Ziegler factor of $(X, T)$, which is characterized by the $k+1$-th Gowers-Host-Kra seminorm $\vertiii{ \cdot }_{k+1}$ \cite{Gowers, HostKraNEA}. Using the language of these factors, Theorem $\ref{mainResult}$ can be shown by proving the following:
\begin{theorem}\label{mainThm}
	Let $(X, \mathcal{F}, \mu, T)$ be an ergodic system, and $f_1, f_2 \in L^\infty(\mu)$ such that $\norm{f_i}_{L^\infty(\nu)} \leq 1$ for both $i = 1, 2$. Fix a positive integer $k \geq 1$. Then the following statements are true.
	\begin{enumerate}
		\item[(a)] Suppose either $f_1, f_2 \in \mathcal{Z}_{k+1}(T)^\perp$. Then there exists a set of full-measure $\tilde{X}_k \subset X$ such that for any $x \in \tilde{X}$, any other measure-preserving system $(Y, \mathcal{G}, \nu, S)$, and functions $g_1, g_2, \ldots, g_k \in L^\infty(\nu)$ where $\norm{g_j}_{L^\infty(\nu)} \leq 1$ for each $1 \leq j \leq k$, the averages
		\begin{equation}\label{mainAvg}
		\frac{1}{N} \sum_{n=1}^N f_1(T^{an}x)f_2(T^{bn}x) \prod_{j=1}^k g_j \circ S^{jn}
		\end{equation}
		converge to $0$ in $L^2(\nu)$.
		\item[(b)] For any $f_1, f_2 \in L^\infty(\mu)$, there exists a set of full-measure $\hat{X}_k$ such that for any $x \in \hat{X}$ and any other ergodic system $(Y, \mathcal{G}, \nu, S)$, and functions $g_1, \ldots, g_k \in L^\infty(\nu)$ with one of them belonging to $\mathcal{Z}_k(S)^\perp$, the averages in $(\ref{mainAvg})$ converge to $0$ in $L^2(\nu)$.
		\item[(c)] Suppose both $f_1, f_2 \in \mathcal{Z}_{k+1}(T)$. Then there exists a set of full-measure $X_k' \in X$ such that for any $x \in X'$, and for any other ergodic system $(Y, \mathcal{G}, \nu, S)$ and functions $g_1, \ldots, g_k \in L^\infty(\nu) \cap \mathcal{Z}_{k}(S)$, the averages in $(\ref{mainAvg})$ converge in $L^2(\nu)$.
	\end{enumerate}
\end{theorem}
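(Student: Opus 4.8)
The plan is to establish the three parts of Theorem~\ref{mainThm} separately, since they involve different mechanisms, and then assemble Theorem~\ref{mainResult} by a double reduction through the Host--Kra--Ziegler factors $\mathcal{Z}_{k+1}(T)$ on the $X$-side and $\mathcal{Z}_k(S)$ on the $Y$-side. For part~(a), I would argue by induction on $k$, taking the base case $k=1$ from the double recurrence Wiener--Wintner theorem cited in the excerpt (\cite{WWDR}): when, say, $f_2\in\mathcal{Z}_2(T)^\perp$, the averages $\frac1N\sum_{n\le N}f_1(T^{an}x)f_2(T^{bn}x)e^{2\pi i n t}$ tend to $0$ uniformly in $t$ for a.e.~$x$, and the single average $\frac1N\sum g_1\circ S^n$ can be handled by spectral decomposition of $g_1$. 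For the inductive step $k-1\Rightarrow k$, I would use the standard van der Corput / telescoping trick on $n\mapsto g_k\circ S^{kn}$: squaring the $L^2(\nu)$-norm and averaging in $n$ produces, after a shift by $h$, a new average of the same form with $k-1$ functions $\tilde g_j = g_j \cdot \overline{g_j\circ S^{jh}}$ on the product system, while the weight becomes $f_1(T^{an}x)\overline{f_1(T^{a(n+h)}x)}f_2(T^{bn}x)\overline{f_2(T^{b(n+h)}x)}$; one must check that this new weight is still, for a.e.~$x$, a good universal weight witnessing the $k-1$ case with the same hypothesis $f_i\in\mathcal{Z}_{k+1}(T)^\perp\subset\mathcal{Z}_k(T)^\perp$ — this is where the nesting of the Gowers--Host--Kra seminorms $\vertiii{\cdot}_{k+1}\ge\vertiii{\cdot}_k$ is used, together with the fact that $\vertiii{f}_{k}=0$ survives the shift operation after a further averaging in $h$. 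The excerpt itself says only $k=2$ and $k=3$ will be written out explicitly, so I would present those two cases as the template and indicate the general induction.

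For part~(b), the point is that if one of $g_1,\dots,g_k$ lies in $\mathcal{Z}_k(S)^\perp$, i.e.\ has vanishing $\vertiii{\cdot}_{k+1,S}$-seminorm, then the Furstenberg-type average $\frac1N\sum_{n\le N}u_n\prod_j g_j\circ S^{jn}$ converges to $0$ in $L^2(\nu)$ for \emph{every} bounded weight sequence $(u_n)$ with $\sup_N\frac1N\sum_{n\le N}|u_n|<\infty$, and in particular for $u_n=f_1(T^{an}x)f_2(T^{bn}x)$. This should follow from the Host--Kra structure theory exactly as in the unweighted case: a van der Corput estimate bounds the limsup of the averaged norm by a quantity controlled by $\vertiii{g_j}_{k+1,S}$ (uniformly in the weight, using only $\|u\|_\infty\le 1$), which is $0$ by hypothesis. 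Here the full-measure set $\hat X_k$ can be taken to be $X$ itself, or rather any set on which $(u_n)$ has bounded averages, so almost no work on the $X$-side is needed — the content is the seminorm bound, which I would state as a lemma and prove by one round of van der Corput plus the known characteristic-factor result for Furstenberg averages.

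Part~(c) is the arithmetic heart. With $f_1,f_2\in\mathcal{Z}_{k+1}(T)$ and $g_1,\dots,g_k\in\mathcal{Z}_k(S)$, I would replace each system by a nilsystem factor: by the Host--Kra--Ziegler theorem $\mathcal{Z}_{k+1}(T)$ is an inverse limit of $(k+1)$-step nilsystems and $\mathcal{Z}_k(S)$ of $k$-step nilsystems, so after approximation it suffices to treat $X=G/\Gamma$ with $T$ translation by $\alpha$, $Y=G'/\Gamma'$ with $S$ translation by $\beta$, and $f_i$, $g_j$ continuous. Then for a.e.\ (indeed every generic) $x=g\Gamma$, the sequence $n\mapsto\big(f_1(\alpha^{an}x),f_2(\alpha^{bn}x),\beta^{n}y,\beta^{2n}y,\dots,\beta^{kn}y\big)$ is a polynomial orbit on the product nilmanifold $G^{(k+1)}\times (G')^{k}/(\cdots)$, and Leibman's theorem \cite{Leibman} gives convergence (in fact for every $x,y$, after removing a measure-zero set of $x$ coming from the ergodic decomposition / unique ergodicity of the relevant sub-nilsystem) of $\frac1N\sum_{n\le N}$ of the product of the corresponding continuous functions. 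The full-measure set $X_k'$ is the set of $x$ whose orbit closure under the diagonal $\ZZ$-action generated by $(\alpha^a,\alpha^b)$ is a sub-nilmanifold on which the orbit equidistributes — this is full measure by the ergodic decomposition of nilsystems.

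The main obstacle I anticipate is the inductive step in part~(a): one must verify that the van der Corput shift produces a weight which \emph{still} satisfies the hypotheses of the $(k-1)$-case \emph{for a single full-measure set of $x$ valid for all shifts $h$ simultaneously}, and that the orthogonality $f_i\in\mathcal{Z}_{k+1}(T)^\perp$ is strong enough to be inherited after the shift-and-average operation — concretely, controlling $\limsup_N\|\frac1N\sum u_n\prod g_j\circ S^{jn}\|_2^2$ by $\limsup_H\frac1H\sum_h\limsup_N\|\frac1N\sum u^{(h)}_n\prod\tilde g^{(h)}_j\circ S^{jn}\|_2$ and then feeding in the inductive hypothesis uniformly. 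This requires the uniform (in $g$, in $h$, and in the auxiliary parameters) version of the base-case double recurrence Wiener--Wintner estimate, which is why the excerpt stresses that the $k=1$ case "is the key step required to establish the base case" — the rest of part~(a) is bookkeeping around that uniform estimate.
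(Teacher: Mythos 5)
Your proposal is correct and follows essentially the same route as the paper: van der Corput induction on the $X$-side weight reducing part (a) to the uniform double recurrence Wiener--Wintner estimate (combined with Fatou and the Gowers--Host--Kra seminorm recursion to produce the universal full-measure set, and the spectral theorem to dispose of the last $g$), a weight-uniform van der Corput seminorm bound for part (b) (the paper's Proposition~\ref{sequence}), and the product nilmanifold with a polynomial orbit plus Leibman's theorem and approximation for part (c). The one imprecision --- that for a fixed shift $h$ the derived functions $f_i\cdot f_i\circ T^{a_i h}$ need not lie in $\mathcal{Z}_k(T)^\perp$, so the $(k-1)$-case cannot be invoked qualitatively shift by shift --- is exactly the obstacle you flag, and your proposed quantitative iterated-$\liminf$ estimate fed by the uniform base case is precisely the paper's Lemma~\ref{dynEstMultLemma}.
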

\begin{proof}[Proof that Theorem $\ref{mainThm}$ implies Theorem $\ref{mainResult}$.]{Fix a positive integer $k \geq 1$. Let $f'_i = f_i - \expec(f_i|\mathcal{Z}_{k+1})$ for $i = 1, 2$. We rewrite the averages in $(\ref{mainAvg})$ as follows:
\begin{align}\label{decomposition}
&\frac{1}{N} \sum_{n=1}^N f_1(T^{an}x)f_2(T^{bn}x) \prod_{j=1}^k g_j \circ S^{jn} \\
&= \frac{1}{N} \sum_{n=1}^N \expec(f_1|\mathcal{Z}_{k+1})(T^{an}x)\expec(f_2|\mathcal{Z}_{k+1})(T^{bn})\prod_{j=1}^k g_j \circ S^{jn} \nonumber\\ &+\frac{1}{N}\sum_{n=0}^{N-1}\expec(f_1|\mathcal{Z}_{k+1})(T^{an}x)f_2'(T^{bn}x) \prod_{j=1}^k g_j \circ S^{jn} \nonumber\\
&+\frac{1}{N} \sum_{n=1}^N f_1'(T^{an}x)\expec(f_2|\mathcal{Z}_{k+1})(T^{bn}x) \prod_{j=1}^k g_j \circ S^{jn} \nonumber \\
&+\frac{1}{N} \sum_{n=1}^N f_1'(T^{an}x)f_2'(T^{bn}x) \prod_{j=1}^k g_j \circ S^{jn} \nonumber.
\end{align}
We know that, by Theorem $\ref{mainThm}$(a), there exists a universal set of full-measure $\tilde{X}_k$ such that for all $x \in \tilde{X}_k$, the last three averages of the right hand side of $(\ref{decomposition})$ converge to $0$ in $L^2(\nu)$. And by Theorem $\ref{mainThm}$(b-c), the first averages also converge in $L^2(\nu)$ for all $x \in \hat{X}_k \cap X'_k$. So if we set
\[ X_{f_1, f_2, k} = \tilde{X}_k \cap \hat{X}_k \cap X'_k, \]
then $X_{f_1, f_2, k}$ is a set full-measure that only depends on $f_1$, $f_2$, the transformation $T$, and the positive integer $k$, since it is a finite intersection of the sets of full-measure, each only depending on the functions $f_1$, $f_2$, and the transformation $T$. Thus, for any $x \in X_{f_1, f_2, k}$, $a, b \in \ZZ$, and any other ergodic system $(Y, \mathcal{G}, \nu, S)$ with functions $g_1, \ldots, g_k \in L^\infty(\nu)$, the averages in $(\ref{mainAvg})$ converge in $L^2(\nu)$. This implies that the set
\[ X_{f_1, f_2} = \bigcap_{k=1}^\infty X_{f_1, f_2, k} \]
is a set of full-measure that only depends on the functions $f_1$, $f_2$, and the transformation $T$, and this is indeed the desired universal set for Theorem $\ref{mainResult}$.}
\end{proof}
\subsection{Organization of the paper}
In \S$\ref{outside}$, we will prove (a) of Theorem $\ref{mainThm}$, which treats the case where either $f_1$ or $f_2$ belongs to the orthogonal complement of the appropriate factor of $(X, T)$. The case where $f_1$ and $f_2$ both belong to the appropriate factor is discussed in \S$\ref{inside}$, where we first look at the case where either one of the functions $g_1, \ldots, g_k$ belongs to the orthogonal complement of the appropriate factor of $(Y, S)$ (which corresponds to (b) of Theorem $\ref{mainThm}$), and the case all of them belong to the appropriate factor (which corresponds to (c) of Theorem $\ref{mainThm}$). 
{\subsection{Acknowledgment} We thank the anonymous referee for his/her comments.}
\section{The case where either $f_1$ or $f_2$ belongs to $\mathcal{Z}_{k+1}(T)^\perp$ (Proof of (a) of Theorem $\ref{mainThm}$)}\label{outside}
The idea of the proof is as follows: We will first prove the statement for the case $k = 2$. We first identify the set of full-measure for which the averages in $(\ref{mainAvg})$ converges to $0$; the fact that this is indeed a set of full-measure can be shown by using Fatou's lemma and the following inequality obtained in \cite{WWDR}:
\begin{equation}\label{estimate}
\int \limsup_{N \to \infty} \sup_{t \in \RR} \left|\frac{1}{N} \sum_{n=0}^{N-1} f_1(T^{an}x)f_2(T^{bn}x) e^{2\pi i n t} \right|^2 d\mu(x) \lesssim_{a, b} \min_{i = 1, 2}\vertiii{f_i}_3^2.
\end{equation}
 The key observation of the proof is the fact that $S$ is a measure-preserving transformation allows us to bound the $L^2(\nu)$-norm of the averages by the double recurrence Wiener-Wintner averages; to do so, we apply van der Corput's lemma \cite{KuipersNiederreiter}, H\"{o}lder's inequality, and the spectral theorem. This allows us to show that the averages in $(\ref{mainAvg})$ indeed converge to $0$ when $k = 2$ for this set of full-measure.

Then we will proceed for the case $k = 3$ to demonstrate that the claim can be proven inductively for the case $k > 2$. Again we start by identifying the set of full-measure. To show that the averages converge to $0$ on this set, we rely on the result obtained for the case $k = 2$.

Before we prove this part of the theorem, we will prove this for the case where $k = 2, 3$ to demonstrate the inductive step for simple cases. For the case $k=2$, we would like to show that
\begin{equation}\label{unifTwoTerms}
\limsup_{N \to \infty} \norm{\frac{1}{N} \sum_{n=1}^N f_1(T^{an}x)f_2(T^{bn}x) g_1 \circ S^n  g_2 \circ S^{2n}}_{L^2(\nu)}^2 = 0.
\end{equation}
Consider a set
\begin{equation}
\tilde{X}_2 = \left\{ x \in X : \liminf_{H \to \infty} \left(\frac{1}{H}\sum_{h=1}^H \limsup_{N \to \infty} \sup_{t \in \RR}\left| \frac{1}{N} \sum_{n=1}^{N-h} f_1 \cdot f_1 \circ T^{ah}(T^{an}x) f_2 \cdot f_2 \circ T^{bh}(T^{bn}x)e^{2\pi i n t}\right|^2\right)^{{1/2}} = 0  \right\}
\end{equation}
First we show that $\tilde{X}_2$ is a set of full-measure. To do so, we apply Fatou's lemma and the inequality $(\ref{estimate})$ to obtain
\begin{align*}
&\int  \liminf_{H \to \infty}  \left(\frac{1}{H}\sum_{h=1}^H \limsup_{N \to \infty} \sup_{t \in \RR}\left| \frac{1}{N} \sum_{n=1}^{N-h} f_1 \cdot f_1 \circ T^{ah}(T^{an}x) f_2 \cdot f_2 \circ T^{bh}(T^{bn}x)e^{2\pi i n t}\right|^2 \right)^{{1/2}}d\mu \\
&\leq \liminf_{H \to \infty} \left(\frac{1}{H}\sum_{h=1}^H \int  \limsup_{N \to \infty} \sup_{t \in \RR}\left| \frac{1}{N} \sum_{n=1}^{N-h} f_1 \cdot f_1 \circ T^{ah}(T^{an}x) f_2 \cdot f_2 \circ T^{bh}(T^{bn}x)e^{2\pi i n t}\right|^2 d\mu \right)^{{1/2}} \\
&\lesssim_{a, b} \min_{i=1, 2}  \liminf_{H \to \infty} \frac{1}{H}\sum_{h=1}^H \vertiii{f_i \cdot f_i \circ T^h}_3  \leq \min_{i=1, 2} \left( \liminf_{H \to \infty} \frac{1}{H}\sum_{h=1}^H \vertiii{f_i \cdot f_i \circ T^h}_3^8 \right)^{{1/8}} \\
&= \min_{i=1, 2} \vertiii{f_i}_4^2.
\end{align*}
Since either $f_1$ or $f_2$ belongs to $\mathcal{Z}_3(T)^\perp$, either $\vertiii{f_1}_4$ or $\vertiii{f_2}_4$ equals zero. This shows that $\mu(\tilde{X}_2) = 1$.

Now we claim $(\ref{unifTwoTerms})$ holds for all $x \in \tilde{X}_2$. In fact, we show that for any $1 \leq H < N$, we have
\begin{align}\label{dynamicEstimateTwoTerms}
&\norm{ \frac{1}{N} \sum_{n=1}^N f_1(T^{an}x)f_2(T^{bn}x)g_1 \circ S^n g_2 \circ S^{2n}}_{L^2(\nu)}^2 \nonumber\\
&\lesssim_{a, b} \frac{1}{H} + \left( \frac{1}{H} \sum_{h=1}^H \sup_{t \in \RR} \left| \frac{1}{N} \sum_{n=1}^{N-h} f_1 \cdot f_1 \circ T^{ah}(T^{an}x)f_2 \cdot f_2 \circ T^{bh}(T^{bn}x)e^{2\pi i n t} \right|^2 \right)^{1/2}
\end{align}
To do so, we proceed with van der Corput's lemma; using the fact that $S$ is a measure preserving transformation, we obtain
\begin{align*}
&\norm{ \frac{1}{N} \sum_{n=1}^N f_1(T^{an}x)f_2(T^{bn}x)   g_1 \circ S^n  g_2 \circ S^{2n}}_{L^2(\nu)}^2 \\
&\leq \frac{2}{H} + \frac{4}{H} \sum_{h=1}^H\left| \int \frac{1}{N} \sum_{n=1}^{N-h} f_1 \cdot f_1 \circ T^{ah}(T^{an}x)f_2 \cdot f_2 \circ T^{bh}(T^{bn}x)(g_1 \cdot  g_1 \circ S^h) \circ S^n (g_2 \cdot  g_2 \circ S^{2h}) \circ S^{2n} d\nu \right| \\
&= \frac{2}{H} + \frac{4}{H} \sum_{h=1}^H  
\left| \int \frac{1}{N} \sum_{n=1}^{N-h} f_1 \cdot f_1 \circ T^{ah}(T^{an}x)f_2 \cdot f_2 \circ T^{bh}(T^{bn}x) (g_1 \cdot  g_1 \circ S^h)(g_2 \cdot g_2 \circ  S^{2h}) \circ S^{n}  d\nu \right| \\
&\leq \frac{2}{H} + \frac{4}{H} \sum_{h=1}^H \norm{ 
 \frac{1}{N} \sum_{n=1}^N f_1 \cdot f_1 \circ T^{ah}(T^{an}x)f_2 \cdot f_2 \circ T^{bh}(T^{bn}x) (g_2 \cdot  g_2 \circ S^{2h}) \circ S^n }_{L^2(\nu)} \text{ (by H\"{o}lder's inequality)}\\
&\leq \frac{2}{H} + \left(\frac{16}{H} \sum_{h=1}^H \norm{ 
	\frac{1}{N} \sum_{n=1}^N f_1 \cdot f_1 \circ T^{ah}(T^{an}x)f_2 \cdot f_2 \circ T^{bh}(T^{bn}x) (g_2 \cdot  g_2 \circ S^{2h}) \circ S^n }_{L^2(\nu)}^2 \right)^{1/2} \, ,
\end{align*}
where the last inequality follows from the Cauchy-Schwarz inequality. We apply the spectral theorem to the square of the $L^2(\nu)$-norm in the last line to obtain
\begin{align*}
&\norm{ 
	\frac{1}{N} \sum_{n=1}^N f_1 \cdot f_1 \circ T^{ah}(T^{an}x)f_2 \cdot f_2 \circ T^{bh}(T^{bn}x) (g_2 \cdot g_2 \circ  S^{2h}) }_{L^2(\nu)}^2 \\
&= \int \left| \frac{1}{N} \sum_{n=1}^N f_1 \cdot f_1 \circ T^{ah}(T^{an}x)f_2 \cdot f_2 \circ T^{bh}(T^{bn}x) e^{2\pi i n t} \right|^2 d\sigma_{g_2 \cdot  g_2 \circ S^{2h}}(t)\\
&\leq \sup_{t \in \RR} \left| \frac{1}{N} \sum_{n=1}^N f_1 \cdot f_1 \circ T^{ah}(T^{an}x)f_2 \cdot f_2 \circ T^{bh}(T^{bn}x) e^{2\pi i n t} \right|^2\, ,
\end{align*}
which tells us that $(\ref{dynamicEstimateTwoTerms})$ holds. Thus, if $x \in \tilde{X}^2$, and we let $N \to \infty$ (and consequently $H \to \infty$) in $(\ref{dynamicEstimateTwoTerms})$, we obtain
\begin{align*}
&\limsup_{N \to \infty} \norm{ \frac{1}{N} \sum_{n=1}^N f_1(T^{an}x)f_2(T^{bn}x)g_1 \circ S^n g_2 \circ S^{2n}}_{L^2(\nu)}^2 \nonumber\\
&\lesssim_{a, b} \left( \liminf_{H \to \infty} \frac{1}{H} \sum_{h=1}^H \limsup_{N \to \infty} \sup_{t \in \RR} \left| \frac{1}{N} \sum_{n=1}^{N-h} f_1 \cdot f_1 \circ T^{ah}(T^{an}x)f_2 \cdot f_2 \circ T^{bh}(T^{bn}x)e^{2\pi i n t} \right|^2 \right)^{1/2} = 0.
\end{align*}
This proves the case for $k = 2$.
Now we show that the holds for the case $k = 3$ using the fact that the convergence to $0$ holds for $k = 2$. We let $F_{1, h_1} = f_1 \cdot f_1 \circ T^{ah_1}$ and $F_{2, h_1} = f_2 \cdot f_2 \circ T^{bh_1}$. Then we set
\begin{align*} 
\tilde{X}_3 &= \left\{ x \in X: \liminf_{H_1 \to \infty} \left(\frac{1}{H_1}\sum_{h_1 = 1}^{H_1} \liminf_{H_2 \to \infty}  \frac{1}{H_2} \sum_{h_2 = 1}^{H_2} \right. \right. \\ 
&\left. \left. \limsup_{N \to \infty} \sup_{t \in \RR} \left| \frac{1}{N} \sum_{n=1}^{N} F_{1, h_1} \cdot F_{1, h_1} \circ T^{ah_2}(T^{an}x)F_{2, h_1} \cdot F_{2, h_1} \circ T^{bh_2}(T^{bn}x)e^{2\pi i n t} \right|^2 \right)^{{1/4}} = 0 \right\}.\end{align*}
We first show that $\tilde{X}_3$ is a set of full-measure. To see that, we apply Fatou's lemma twice to interchange the integral and the $\liminf$'s, H\"{o}lder's inequality, the inequality $(\ref{estimate})$, and the Cauchy-Schwarz inequality multiple times to obtain
\begin{align*}
& \int \liminf_{H_1 \to \infty} \left(\frac{1}{H_1}\sum_{h_1 = 1}^{H_1}  \liminf_{H_2 \to \infty} \frac{1}{H_2}\sum_{h_2 = 1}^{H_2} \right.  \\
&  \left. \limsup_{N \to \infty} \sup_{t \in \RR} \left| \frac{1}{N} \sum_{n=1}^{N} F_{1, h_1} \cdot F_{1, h_1} \circ T^{ah_2}(T^{an}x)F_{2, h_1} \cdot F_{2, h_1} \circ T^{bh_2}(T^{bn}x)e^{2\pi i n t} \right|^2 d\mu(x) \right)^{{1/4}} \\
&\leq \liminf_{H_1 \to \infty} \left( \frac{1}{H_1}\sum_{h_1 = 1}^{H_1} \liminf_{H_2 \to \infty}  \frac{1}{H_2}\sum_{h_2 = 1}^{H_2} \right. \\
& \left. \int \limsup_{N \to \infty} \sup_{t \in \RR} \left| \frac{1}{N} \sum_{n=1}^{N} F_{1, h_1} \cdot F_{1, h_1} \circ T^{ah_2}(T^{an}x)F_{2, h_1} \cdot F_{2, h_1} \circ T^{bh_2}(T^{bn}x)e^{2\pi i n t} \right|^2 d\mu(x) \right)^{{1/4}}\\
&\lesssim_{a_1, a_2} \liminf_{H_1 \to \infty} \left(\frac{1}{H_1}\sum_{h_1 = 1}^{H_1}  \liminf_{H_2 \to \infty} \frac{1}{H_2}\sum_{h_2 = 1}^{H_2} \min_{i=1, 2}\vertiii{F_{i, h_1} \cdot F_{i, h_1} \circ T^{a_ih_2}}_3^2\right)^{{1/4}} \; \mbox{ (where $a_1 = a$, $a_2 = b$)} \\
&\leq \liminf_{H_1 \to \infty}\left( \frac{1}{H_1}\sum_{h_1 = 1}^{H_1}  \left(\liminf_{H_2 \to \infty} \frac{1}{H_2}\sum_{h_2 = 1}^{H_2} \min_{i=1, 2}\vertiii{F_{i, h_1} \cdot F_{i, h_1} \circ T^{a_ih_2}}_3^8 \right)^{1/4} \right)^{{1/4}}\\
&\lesssim_{a_1, a_2} \liminf_{H_1 \to \infty} \left(\frac{1}{H_1} \sum_{h_1=1}^{H_1} \min_{i=1, 2} \vertiii{f_i \cdot f_i \circ T^{a_i h_1}}_4^4\right)^{{1/4}} \\
&\leq \liminf_{H_1 \to \infty} \left(\frac{1}{H_1} \sum_{h_1=1}^{H_1} \min_{i=1, 2} \vertiii{f_i \cdot f_i \circ T^{a_i h_1}}_4^{16} \right)^{{1/16}} \lesssim_{a_1, a_2} \min_{i=1, 2}\vertiii{f_i}_5^{{2}},
\end{align*}
and since either $f_1$ or $f_2$ belongs to $\mathcal{Z}_4(T)^\perp$, either $\vertiii{f_1}_5$ or $\vertiii{f_2}_5$ equals zero. Hence, we know that $\tilde{X}_3$ is a set of full-measure.

Now we will show that the averages converge to $0$ when $x \in \tilde{X}_3$. To do so, we wish to show that
\begin{align*}
&\norm{ \frac{1}{N} \sum_{n=1}^N f_1(T^{an}x)f_2(T^{bn}x)\prod_{j=1}^3 g_j \circ S^{jn}}_{L^2(\nu)}^2 \\
&\lesssim_{a, b} \frac{1}{H_1} + \left(\frac{1}{H_1} \sum_{h_1=0}^{H_1-1} \left( \frac{2}{H_2} + \right. \right. \\
&\left. \left. \left(\frac{16}{H_2}\sum_{h_2=0}^{H_2-1} \sup_{t \in \RR} \left| \frac{1}{N} \sum_{n=1}^{N} F_{1, h_1} \cdot F_{1, h_1} \circ T^{ah_2}(T^{an}x)F_{2, h_1} \cdot F_{2, h_1} \circ T^{bh_2}(T^{bn}x)e^{2\pi i n t} \right|^2\right)\right) \right)^{1/4}
\end{align*}
Indeed, we apply van der Corput's lemma and the Cauchy-Schwarz inequality to show that
\begin{align*}
&\norm{\frac{1}{N} \sum_{n=1}^N f_1(T^{an}x)f_2(T^{bn}x)\prod_{j=1}^3 g_j \circ S^{jn} }_{L^2(\nu)}^2 \\
&\leq \frac{2}{H_1} + \frac{4}{H_1} \sum_{h=1}^H \left| \int \frac{1}{N} \sum_{n=1}^{N-h_1} f_1 \cdot f_1 \circ T^{ah_1}(T^{an}x) f_2 \cdot f_2 \circ T^{bh_1}(T^{bn}x) \prod_{j=1}^3 (g_j \cdot g_j \circ S^{jh_1}) \circ S^{jn} d\nu \right|\\
&= \frac{2}{H_1} + \frac{4}{H_1} \sum_{h_1=1}^{H_1} \left| \int \frac{1}{N} \sum_{n=1}^{N-h_1} f_1 \cdot f_1 \circ T^{ah}(T^{an}x) f_2 \cdot f_2 \circ T^{bh_1}(T^{bn}x) \prod_{j=1}^3 (g_j \cdot g_j \circ S^{jh_1}) \circ S^{(j-1)n} d\nu \right|\\
&\leq \frac{2}{H_1} + \left(\frac{16}{H_1} \sum_{h_1=1}^{H_1} \norm{ \frac{1}{N} \sum_{n=1}^{N-h_1} f_1 \cdot f_1 \circ T^{ah_1}(T^{an}x) f_2 \cdot f_2 \circ T^{bh_1}(T^{bn}x) \prod_{j=2}^3 (g_j \cdot g_j \circ S^{jh_1}) \circ S^{(j-1)n} }_{L^2(\nu)}^2 \right)^{1/2}
\end{align*}
We can now apply the inequality $(\ref{dynamicEstimateTwoTerms})$ and the Cauchy-Schwarz inequality to obtain
\begin{align*}
&\norm{\frac{1}{N} \sum_{n=1}^N f_1(T^{an}x)f_2(T^{bn}x)\prod_{j=1}^3 g_j \circ S^{jn} }_{L^2(\nu)}^2 \\
&\lesssim_{a, b} \frac{1}{H_1} + \left(\frac{1}{H_1}\sum_{h_1 = 1}^{H_1} \left( \frac{1}{H_2} +  \right. \right. \\
&\left. \left. \left(\frac{1}{H_2}\sum_{h_2 = 1}^{H_2} \sup_{t \in \RR} \left| \frac{1}{N} \sum_{n=1}^{N-h_1-h_2} F_{1, h_1} \cdot F_{1, h_1} \circ T^{ah_2}(T^{an}x)F_{2, h_1} \cdot F_{2, h_1} \circ T^{bh_2}(T^{bn}x)e^{2\pi i n t} \right|^2 \right) \right) \right)^{1/4}
\end{align*}
Therefore, we have shown that the averages converge to $0$ in $L^2(\nu)$ when $x \in \tilde{X}_3$.

One of the key observations in showing that the case $k = 2$ implies the case $k = 3$ was the use of the inequality $(\ref{dynamicEstimateTwoTerms})$. We will show that this can be done for $k \geq 4$. For the following lemma, we will use the following notations for our convenience: We shall denote $a_1 = a$ and $a_2 = b$. Let $\vec{h}_l = (h_1, h_2, \ldots, h_l) \in \NN^l$. With this notation, we define the following functions recursively:
\[ \begin{array}{ll}
F_{1, \vec{h}(1)} = f_1 \cdot f_1 \circ T^{a_1h_1},
& F_{2, \vec{h}(1)} = f_2 \cdot f_2 \circ T^{a_2h_1}, \\
F_{1, \vec{h}(2)} = F_{1, \vec{h}(1)} \cdot F_{1, \vec{h}(1)} \circ T^{a_1h_2},
& F_{2, \vec{h}(2)} = F_{2, \vec{h}(1)} \cdot F_{2, \vec{h}(1)} \circ T^{a_2h_2}, \\
\cdots, & \cdots, \\
F_{1, \vec{h}(k-1)} = F_{1, \vec{h}(k-2)} \cdot F_{1, \vec{h}(k-2)} \circ T^{a_1h_{k-1}},
& F_{2, \vec{h}(k-1)} = F_{2, \vec{h}(k-2)} \cdot F_{2, \vec{h}(k-2)}\circ T^{a_2h_{k-1}} \, .
\end{array} \]

\begin{lemma}\label{dynEstMultLemma}
Let everything as in (a) of Theorem $\ref{mainThm}$. Then for each positive integer $k \geq 2$, we have
\begin{align}\label{dynEstMult}
&\limsup_{N \to \infty} \norm{\frac{1}{N} \sum_{n=1}^N f_1(T^{a_1n}x)f_2(T^{a_2n}x) \prod_{i=1}^k g_i \circ S^{in}}_{L^2(\nu)}^2 \\
&\lesssim_{a_1, a_2} \liminf_{H_1 \to \infty}  \left(\frac{1}{H_1} \sum_{h_1=1}^{H_1} \liminf_{H_2 \to \infty} \frac{1}{H_2} \sum_{h_2=1}^{H_2} \cdots \right. \nonumber \\
& \left. \liminf_{H_{k-1} \to \infty} \frac{1}{H_{k-1}} \sum_{h_{k-1}=1}^{H_{k-1}} \limsup_{N \to \infty} \sup_{t \in \RR} \left| \frac{1}{N} \sum_{n=1}^N F_{1, \vec{h}(1)}(T^{a_1n}x)F_{2, \vec{h}(1)}(T^{a_2n}x) e^{2\pi i n t} \right|^2 \right)^{2^{-(k-1)}} \nonumber
\end{align}
\end{lemma}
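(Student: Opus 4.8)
The plan is an induction on $k$ that simply iterates the van der Corput argument already carried out in the text for $k=2$ and $k=3$. The base case $k=2$ is exactly $(\ref{dynamicEstimateTwoTerms})$: taking $\limsup_{N\to\infty}$ of both sides, pulling the $\limsup$ inside the finite average $\frac1H\sum_h$ (subadditivity of $\limsup$) and through $\sqrt{\,\cdot\,}$ (monotonicity and continuity), replacing $\sum_{n=1}^{N-h}$ by $\sum_{n=1}^N$ at a cost $O(h/N)\to0$ uniform in $t$, and then passing to $\inf_{H\ge1}\le\liminf_{H\to\infty}$ while absorbing the vanishing $1/H$ term, yields precisely $(\ref{dynEstMult})$ with $k=2$.

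For the inductive step, fix $k\ge3$ and assume $(\ref{dynEstMult})$ with $k-1$ in place of $k$, valid for all ergodic $(X,\mathcal F,\mu,T)$, all $f_1,f_2$ with $\norm{f_i}_{L^\infty}\le1$, all $(Y,\mathcal G,\nu,S)$ and all $g_1,\dots,g_{k-1}$ with $\norm{g_j}_{L^\infty}\le1$, with implicit constant depending only on $a_1,a_2$. Put $v_n=f_1(T^{a_1n}x)f_2(T^{a_2n}x)\prod_{i=1}^k g_i\circ S^{in}\in L^2(\nu)$. Van der Corput's lemma with window $H_1$ bounds $\norm{\frac1N\sum_{n=1}^N v_n}_{L^2(\nu)}^2$ by $\frac2{H_1}+\frac4{H_1}\sum_{h_1=1}^{H_1}\big|\frac1N\sum_{n=1}^{N-h_1}\langle v_n,v_{n+h_1}\rangle\big|$, and
\[ \langle v_n,v_{n+h_1}\rangle = F_{1,\vec h(1)}(T^{a_1n}x)\,F_{2,\vec h(1)}(T^{a_2n}x)\int\prod_{i=1}^k\big(g_i\cdot g_i\circ S^{ih_1}\big)\circ S^{in}\,d\nu. \]
The one step with real content: since $S$ preserves $\nu$, composing the integrand with $S^{-n}$ turns $\prod_{i=1}^k(\,\cdot\,)\circ S^{in}$ into $\big(g_1\cdot g_1\circ S^{h_1}\big)\cdot\prod_{j=1}^{k-1}\tilde g_j^{(h_1)}\circ S^{jn}$, where $\tilde g_j^{(h_1)}:=g_{j+1}\cdot g_{j+1}\circ S^{(j+1)h_1}$, so that the $i=1$ factor no longer depends on $n$. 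Since $\norm{g_1\cdot g_1\circ S^{h_1}}_{L^\infty}\le1$, Hölder's inequality removes it, and a Cauchy--Schwarz over $h_1$ gives, for every $N$ and $H_1$,
\[ \norm{\frac1N\sum_{n=1}^N v_n}_{L^2(\nu)}^2 \le \frac2{H_1}+\left(\frac{16}{H_1}\sum_{h_1=1}^{H_1}\norm{\frac1N\sum_{n=1}^{N-h_1}F_{1,\vec h(1)}(T^{a_1n}x)F_{2,\vec h(1)}(T^{a_2n}x)\prod_{j=1}^{k-1}\tilde g_j^{(h_1)}\circ S^{jn}}_{L^2(\nu)}^2\right)^{1/2}. \]
This is a $(k-1)$-term average with admissible weights $F_{1,\vec h(1)},F_{2,\vec h(1)}$ (sup-norm $\le1$) and admissible functions $\tilde g_j^{(h_1)}$.

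It then remains to take $\limsup_{N\to\infty}$, pull it through the finite sum $\frac1{H_1}\sum_{h_1}$ and through $\sqrt{\,\cdot\,}$, pass from $\inf_{H_1\ge1}$ to $\liminf_{H_1\to\infty}$ (absorbing the $2/H_1$), and apply the inductive hypothesis to each $\limsup_N\norm{\cdots}_{L^2(\nu)}^2$. Relabelling the $k-2$ dummy indices the hypothesis produces as $h_2,\dots,h_{k-1}$ (and the windows as $H_2,\dots,H_{k-1}$), its recursive doublings of $F_{i,\vec h(1)}$ are exactly $F_{i,\vec h(2)},\dots,F_{i,\vec h(k-1)}$, so one lands — after the last doubling — on the innermost $\sup_t$ quantity of $(\ref{dynEstMult})$ (with $F_{i,\vec h(k-1)}$, matching the $k=2,3$ computations above). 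Finally the two layers of exponent $\tfrac12$ and $2^{-(k-2)}$ are merged into the single outer exponent $2^{-(k-1)}$, and all the $1/H_\ell$ errors absorbed into the $\liminf$'s, using only the elementary facts $\tfrac1H\le(\tfrac1H)^c$ for $H\ge1$, $0<c\le1$; concavity of $t\mapsto t^c$, i.e.\ $\tfrac1H\sum_h a_h^c\le(\tfrac1H\sum_h a_h)^c$; $\liminf_H(a_H^c)=(\liminf_H a_H)^c$ (same for $\limsup$); and $\liminf_H(\varepsilon_H+a_H)=\liminf_H a_H$ when $\varepsilon_H\to0$. The accumulated numerical constants are folded into $\lesssim_{a_1,a_2}$, giving $(\ref{dynEstMult})$ for $k$.

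I expect no genuine obstacle beyond the van der Corput/shift step above; the real work is organizational — threading the recursively defined $F_{i,\vec h(\ell)}$ and the nested operators $\liminf_{H_\ell}\tfrac1{H_\ell}\sum_{h_\ell}$ through the induction, and handling the interchange of $\limsup_N$, $\liminf_{H_\ell}$, finite averages, and fractional powers in the right order so that the output is literally the right-hand side of $(\ref{dynEstMult})$.
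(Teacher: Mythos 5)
Your proof is correct and follows essentially the same route as the paper's: induction on $k$ with the base case given by $(\ref{dynamicEstimateTwoTerms})$, and an inductive step that applies van der Corput's lemma, the measure-preservation of $S$ together with H\"older and Cauchy--Schwarz to reduce to a $(k-1)$-term average weighted by $F_{1,\vec{h}(1)}(T^{a_1n}x)F_{2,\vec{h}(1)}(T^{a_2n}x)$, to which the inductive hypothesis is applied. Your bookkeeping of the exponent merging and of the interchanges of $\limsup_N$, the finite averages in $h_1$, and $\liminf_{H_1}$ is more explicit than the paper's brief sketch, but the underlying argument is the same.
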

\begin{proof}
	We will show this by using induction. The base case $k = 2$ has been treated by the estimate $(\ref{dynamicEstimateTwoTerms})$ after we let $N \to \infty$ and $H \to \infty$. Now suppose the estimate holds when we have $k-1$ terms. By applying the van der Corput's lemma and the Cauchy-Schwarz inequality, the left hand side of the estimate $(\ref{dynEstMult})$ is bounded above by the universal constant depending on $a_1$ and $a_2$ times 
	\[ \liminf_{H_1 \to \infty} \left( \frac{1}{H_1} \sum_{h_1 = 1}^{H_1} \limsup_{N \to \infty} \norm{\frac{1}{N} \sum_{n=1}^N F_{1, \vec{h}(k-1)} (T^{a_1n}x)F_{2, \vec{h}(k-1)} (T^{a_2n}x) \prod_{i=2}^k (g_i \cdot g_i \circ S^{ih_1}) \circ S^{(i-1)n}}_{L^2(\nu)}^2 \right)^{1/2} \, , \] 
	and we can apply the inductive hypothesis on this $\limsup$ of the square of the $L^2$-norm and the Cauchy-Schwarz inequality to obtain the desired estimate.
\end{proof}
\begin{proof}[Proof of Theorem $\ref{mainThm}(a)$]
  The set $\tilde{X}_1$ can be obtained from the double recurrence Wiener-Wintner result \cite{WWDR} by applying the spectral theorem. For $k \geq 2$, we consider a set
  \begin{align*}&\tilde{X}_k = \left\{x \in X: \liminf_{H_1 \to \infty} \left(\frac{1}{H_1} \sum_{h_1=1}^{H_1}  \liminf_{H_2 \to \infty} \frac{1}{H_2} \sum_{h_2=1}^{H_2} \cdots \right. \right. \\
  &\left. \left. \liminf_{H_k \to \infty}\frac{1}{H_{k-1}} \sum_{h_k=1}^{H_k} \limsup_{N \to \infty} \sup_{t \in \RR} \left| \frac{1}{N} \sum_{n=1}^N F_{1, \vec{h}(k-1)}(T^{a_1n}x)F_{2, \vec{h}(k-1)}(T^{a_2n}x) e^{2\pi int}\right|^2 \right)^{{2^{-(k-1)}}} = 0 \right\}\, . \end{align*}
  We will show that this set is the desired set of full-measure. To show that $\mu(\tilde{X}_k) = 1$, we will show that the integral 
  \begin{align}
  &\label{inductInt}\int \liminf_{H_1 \to \infty} \left(\frac{1}{H_1}  \sum_{h_1=1}^{H_1}  \liminf_{H_2 \to \infty} \frac{1}{H_2} \sum_{h_2=1}^{H_2} \cdots \right. \\
   &\left. \liminf_{H_{k-1} \to \infty}\frac{1}{H_{k-1}} \sum_{h_{k-1}=1}^{H_{k-1}} \limsup_{N \to \infty} \sup_{t \in \RR} \left| \frac{1}{N} \sum_{n=1}^N F_{1, \vec{h}(k-1)}(T^{a_1n}x)F_{2, \vec{h}(k-1)}(T^{a_2n}x)e^{2\pi int} \right|^2 \right)^{{2^{-(k-1)}}} \, d\mu = 0, \nonumber
   \end{align}
   which would show that the averages inside the integral equals zero for $\mu$-a.e. $x \in X$ since the averages are nonnegative. To do so, we apply Fatou's lemma and H\"{o}lder's inequality to show that the integral above is bounded above by
   \begin{align*}
   & \liminf_{H_1 \to \infty}  \left(\frac{1}{H_1} \sum_{h_1=1}^{H_1} \liminf_{H_2 \to \infty} \frac{1}{H_2} \sum_{h_2=1}^{H_2} \cdots \right. \\
   &\left. \liminf_{H_{k-1} \to \infty}\frac{1}{H_{k-1}} \sum_{h_{k-1}=1}^{H_{k-1}} \int \limsup_{N \to \infty} \sup_{t \in \RR} \left| \frac{1}{N} \sum_{n=1}^N F_{1, \vec{h}(k-1)}(T^{a_1n}x)F_{2, \vec{h}(k-1)}(T^{a_2n}x)e^{2\pi int} \right|^2 \, d\mu \right)^{{2^{-(k-1)}}}.
  \end{align*}
  Note that the integral above is bounded above by $\displaystyle{\min_{i=1, 2} \vertiii{F_{i, \vec{h}(k-1)}}_3^2}$ by the estimate $(\ref{estimate})$. By applying $\liminf_{H_j} \to \infty$ for each $i = 1, 2, \ldots, k-1$, we conclude that the integral on the left hand side of $(\ref{inductInt})$ is bounded above by the minimum of $\vertiii{f_1}_{k+2}^2$ or $\vertiii{f_2}_{k+2}^2$. Since either $f_1$ or $f_2$ belongs to $\mathcal{Z}_{k+1}(T)^\perp$, we know that either $\vertiii{f_1}_{k+2} = 0$ or $\vertiii{f_2}_{k+2} = 0$. Thus, $(\ref{inductInt})$ holds, which implies that $\tilde{X}_k$ is indeed a set of full-measure.
  
  Now we need to show that if $x \in \tilde{X}_k$, then the averages in $(\ref{mainAvg})$ converge to $0$ in $L^2(\nu)$. But this follows immediately from Lemma $\ref{dynEstMultLemma}$, since if $x \in \tilde{X}_k$, the right hand side of $(\ref{dynEstMult})$ is $0$. 
\end{proof}

\section{When both $f_1$ and $f_2$ are in $\mathcal{Z}_{k+1}(T)$ (Proof of (b) and (c) of Theorem $\ref{mainThm}$)}\label{inside}
\subsection{When one of the functions $g_1, g_2, \ldots g_k$ belongs to $\mathcal{Z}_{k}(S)^\perp$}
We first consider the case where either one of the functions $g_1, \ldots, g_k$ belongs to $\mathcal{Z}_{k}(S)^\perp$. In fact, we will show that the averages can be bounded by a seminorm on $L^\infty(\nu)$. We remark here that B. Host and B. Kra have obtained an estimate sharper than the one we provide, using the tools of nilsequences (cf. \cite[Corollary 7.3]{HostKraUniformity}). However, the less sharp estimate that we provide here is sufficient to prove our claim. We will also achieve this estimate without the machinery of nilsequences. 

We prove this for the case $(Y, \mathcal{G}, \nu, S)$ is an ergodic system, and the general case holds by applying an ergodic decomposition on $(Y, S)$. 
\begin{prop}[{See also: \cite[Corollary 7.3]{HostKraUniformity}}]\label{sequence}
	Let $(Y, \mathcal{G}, \nu, S)$ is an ergodic system, $(a_n)_n \in \ell^\infty$ such that $|a_n| \leq 1$ for each $n$, and functions $g_1, \ldots, g_k \in L^\infty(\nu)$. Then
	\begin{equation}\label{seqLim} \limsup_{N \to \infty} \norm{\frac{1}{N} \sum_{n=0}^{N-1} a_n \prod_{i=1}^k g_i \circ S^{in} }_{L^2(\nu)}^2 \leq  2^{k+1} \min_{1 \leq i \leq k} i \cdot \vertiii{g_i}_{k+1}^2. \end{equation}
	\end{prop}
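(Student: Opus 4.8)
The plan is to prove the estimate \eqref{seqLim} by induction on $k$, using van der Corput's inequality to pass from a $k$-term product to a $(k-1)$-term product at the cost of introducing a differencing parameter $h$, and then invoking the inductive hypothesis together with the standard fact that the Gowers--Host--Kra seminorms satisfy
\[ \vertiii{g}_{k+1}^{2^{k+1}} = \lim_{H \to \infty} \frac{1}{H} \sum_{h=1}^H \vertiii{g \cdot g \circ S^h}_{k}^{2^{k}}. \]
First I would record the base case $k=1$: the averages $\frac{1}{N}\sum_{n<N} a_n g_1 \circ S^n$ can be bounded in $L^2(\nu)$ by the spectral theorem, writing the square of the norm as $\int |\frac{1}{N}\sum a_n e^{2\pi i n t}|^2 \, d\sigma_{g_1}(t)$; combining with van der Corput applied once (to produce a difference $g_1 \cdot g_1 \circ S^h$) and $|a_n| \le 1$ yields the bound in terms of $\vertiii{g_1}_2^2$ with the claimed constant. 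This is essentially the computation already carried out in the $k=2$ case of Theorem~\ref{mainThm}(a), specialized to $a_n$ in place of the double-recurrence weight.

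For the inductive step, assume \eqref{seqLim} holds for $k-1$ terms (for arbitrary bounded sequences with $|a_n| \le 1$ and arbitrary $g_1, \ldots, g_{k-1}$). Apply van der Corput's lemma to $\frac{1}{N}\sum_{n<N} a_n \prod_{i=1}^k g_i \circ S^{in}$: the cross terms become, after using that $S$ is measure-preserving to shift everything by $S^{-n}$,
\[ \frac{1}{N}\sum_{n} \big(a_n \overline{a_{n+h}}\big) \prod_{i=1}^k (g_i \cdot g_i \circ S^{ih}) \circ S^{(i-1)n}, \]
so that the innermost factor ($i=1$) is the constant-in-$n$ scalar absorbed into a new bounded sequence $a_n' = a_n \overline{a_{n+h}} \cdot (g_1 \cdot g_1\circ S^h)$, which still has sup-norm $\le 1$ (after also noting $g_1 \cdot g_1 \circ S^h$ is genuinely a function, so one should instead keep $a_n' = a_n\overline{a_{n+h}}$ and fold $g_1 \cdot g_1 \circ S^h$ into the product as a degenerate $S^{0\cdot n}$ term, or simply bound $\|g_1 \cdot g_1\circ S^h\|_\infty \le 1$ and pull it out). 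The remaining $(k-1)$-term product is in the functions $g_{i}\cdot g_{i}\circ S^{ih}$, $i=2,\ldots,k$, iterated by $S^{(i-1)n} = S^{jn}$ with $j=1,\ldots,k-1$. The inductive hypothesis then bounds the $h$-th cross term's limsup by $2^k \min_{2\le i \le k}(i-1)\vertiii{g_i \cdot g_i \circ S^{ih}}_{k}^2$; averaging over $h$, applying Cauchy--Schwarz to pull the average inside the $2^k$-th power, and using the seminorm identity above with the transformation $S^i$ (and the fact that $\vertiii{\cdot}_k$ behaves well under passing to powers of $S$, which one should cite or note follows from ergodicity) gives $\liminf_H \frac{1}{H}\sum_h \vertiii{g_i \cdot g_i \circ S^{ih}}_k^{2^k} = \vertiii{g_i}_{k+1}^{2^{k+1}}$ up to keeping careful track of exponents, and hence the desired bound $2^{k+1}\min_i i \cdot \vertiii{g_i}_{k+1}^2$.

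The main obstacle I expect is bookkeeping rather than conceptual: tracking the exact power of $2$ and the multiplicative index factor $i$ through each van der Corput step, and making sure the seminorm-averaging identity is applied with the correct iterate $S^i$ (not $S$) for the $i$-th factor — this requires knowing that $\vertiii{g}_{k+1}$ computed with respect to $S$ and the ``diagonal'' seminorm bound obtained from differencing along $S^i$ are compatible, which is where one leans on the ergodicity of $(Y,S)$ and the structure theory of the factors $\mathcal{Z}_k(S)$. A secondary subtlety is the $\min$ over $i$: each application of van der Corput lets us choose which factor to ``peel'', so after $k-1$ steps we may arrange to difference along whichever $g_i$ we like, and the constant $2^{k+1} i$ records which one was left for last. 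Once these indices are pinned down the rest is a routine iteration of the two inequalities (van der Corput and Cauchy--Schwarz) already used in Section~\ref{outside}.
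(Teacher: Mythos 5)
Your proposal is correct and follows essentially the same route as the paper's proof: induction on $k$, a single application of van der Corput's lemma together with Cauchy--Schwarz, absorbing the peeled factor $g_1 \cdot g_1 \circ S^h$ (bounded by $1$), applying the inductive hypothesis with the weight $a_n\overline{a_{n+h}}$ to the differenced functions $g_i \cdot g_i \circ S^{ih}$ iterated by $S^{(i-1)n}$, and then averaging in $h$ via the Gowers--Host--Kra recursion, with the freedom of which factor to peel giving the minimum over $i$. One small repair to your sketch: the $h$-average of $\vertiii{g_i \cdot g_i \circ S^{ih}}_k^{2^k}$ need not equal $\vertiii{g_i}_{k+1}^{2^{k+1}}$ (this is exactly the $S^i$ versus $S$ issue you flag); since the terms are nonnegative, the average along the progression $ih$ is bounded by $i$ times the average over all shifts, which does converge to $\vertiii{g_i}_{k+1}^{2^{k+1}}$, and it is this step --- not ergodicity of $S^i$ --- that produces the factor $i$ in the stated bound.
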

\begin{proof}
	We proceed by induction on $k$. For the case $k = 1$, we apply van der Corput's lemma and the Cauchy-Schwarz inequality to obtain
	\begin{align*}
	&\limsup_{N \to \infty}\norm{\frac{1}{N} \sum_{n=0}^{N-1}a_n g_1 \circ S^n}_{L^2(\nu)}^2\\
	&\leq \liminf_{H \to \infty} \left(\frac{2}{H} + \frac{4}{H} \sum_{h=0}^{H-1} \limsup_{N \to \infty} \left|  \left(\frac{1}{N} \sum_{n=0}^{N-h} a_n\overline{a}_{n+h}\right) 
	\int g_1 \cdot g_1 \circ S^h d\nu \right| \right) \\
	&\leq \liminf_{H \to \infty} \left(\frac{2}{H} + 4 \left(\frac{1}{H}\sum_{h=0}^{H-1}\left| \int g_1 \cdot g_1 \circ S^h d\nu \right|^2 \right)^{1/2}\right) = 4\vertiii{g_1}_2^2,
	\end{align*}
	which proves the base case.
	
	Now suppose the statement holds for $k = l-1$; i.e. we assume that for any $(b_n)_n \in \ell^\infty$ and $G_1, \ldots, G_{l-1} \in L^\infty(\nu)$, we have
	\begin{equation}\label{indHypEstG}
	\limsup_{N \to \infty} \norm{\frac{1}{N} \sum_{n=0}^{N-1} b_n \prod_{i=1}^{l-1} G_i \circ S^{in}}_{L^2(\nu)}^2 \leq 2^{l+1} \min_{1 \leq i \leq l} i \cdot \vertiii{G_i}_{l}^2
	\end{equation}
	To prove this for the case $k = l$, we again apply van der Corput's lemma and the Cauchy-Schwarz inequality to obtain
	\begin{align*}
	&\limsup_{N \to \infty} \norm{\frac{1}{N} \sum_{n=0}^{N-1} a_n \prod_{i=1}^l g_i \circ S^{in}}_{L^2(\nu)}^2 \\
	&\leq \liminf_{H \to \infty} \frac{4}{H} \sum_{h=0}^{H-1} \limsup_{N \to \infty}\left| \int \frac{1}{N} \sum_{n=0}^{N-1} a_n \overline{a}_{n+h} \prod_{i=1}^l \left(g_i \cdot g_i \circ S^{ih}\right) \circ S^{(i-1)n} d\nu \right|\\
	&\leq \liminf_{H \to \infty} \frac{4}{H} \sum_{h=0}^{H-1}  \limsup_{N \to \infty}\norm{\frac{1}{N} \sum_{n=0}^{N-1} a_n \overline{a}_{n+h} \prod_{i=2}^l \left(g_i \cdot g_i \circ S^{ih}\right) \circ S^{(i-1)n}}_{L^2(\nu)} \, .
	\end{align*}
	By setting $b_n = a_n \overline{a}_{n+h}$ and $G_i = g_{i-1} \cdot g_{i-1} \circ S^{(i-1)h}$ for each $h$, we can apply the inequality  $(\ref{indHypEstG})$ to show that
	\begin{align*}
	\limsup_{N \to \infty} \norm{\frac{1}{N} \sum_{n=0}^{N-1} a_n \prod_{i=1}^l g_i \circ S^{in}}_{L^2(\nu)}^2 
	&\leq 4 \min_{2 \leq i \leq l}\liminf_{H \to \infty} \frac{2^{l+1}}{H} \sum_{h=0}^{H-1} \vertiii{g_i \cdot g_i \circ S^{ih}}_l \\
	&\leq 2^{l+3} \min_{2 \leq i \leq l} i \cdot \liminf_{H \to \infty} \left( \frac{1}{H} \sum_{h=0}^H \vertiii{g_i \cdot g_i \circ S^{ih} }_l^{2^l} \right)^{2^{-l}}\\
	&= 2^{l+3} \min_{2 \leq i \leq l} i \cdot \vertiii{g_i}_{l+1}^2 \, .
	\end{align*}
	To keep $\vertiii{g_1}_{l+1}^2$, we compute
	\begin{align*}
		&\limsup_{N \to \infty} \norm{\frac{1}{N} \sum_{n=0}^{N-1} a_n \prod_{i=1}^l g_i \circ S^{in}}_{L^2(\nu)}^2 \\
		&\leq \liminf_{H \to \infty} \frac{4}{H} \sum_{h=0}^{H-1} \limsup_{N \to \infty}\left| \int \frac{1}{N} \sum_{n=0}^{N-1} a_n \overline{a}_{n+h} \prod_{1 \leq i \leq l, i \neq j} \left(g_i \cdot g_i \circ S^{ih}\right) \circ S^{(i-j)n} d\nu \right|\\
		&\leq 2^{2l+3} \min_{1 \leq i \leq l, i \neq j} i \cdot \vertiii{g_i}_{l+1}^2
	\end{align*}
	for a fixed $j$. When these estimates are combined, we have
	\begin{equation}\label{theIneq}\limsup_{N \to \infty} \norm{\frac{1}{N} \sum_{n=0}^{N-1} a_n \prod_{i=1}^l g_i \circ S^{in}}_{L^2(\nu)}^2 \leq 2^{l+3} \min_{1 \leq i \leq l} i \cdot \vertiii{g_i}^2_{l+1} \, , \end{equation}
	which completes the proof.
\end{proof}
With this estimate, Theorem $\ref{mainThm}$(b) can be proven immediately.
\begin{proof}[Proof of (b) of Theorem $\ref{mainThm}$]
Set $a_n = f_1(T^{an}x)f_2(T^{bn}x)$ in Proposition $\ref{sequence}$. Since $f_1, f_2 \in L^\infty$, there exists a set of full-measure $\hat{X}_k$ for which the sequence $a_n \in \ell^\infty$. Because one of the functions $g_1, \ldots, g_k$ belongs to $\mathcal{Z}_k(S)^\perp$, we must have $\vertiii{g_i}_k = 0$ for one of them. Hence, we know from $(\ref{seqLim})$ that the averages must converge to $0$ in $L^2(\nu)$.
\end{proof}
\subsection{When all of the functions $g_1, \ldots, g_k$ belong to $\mathcal{Z}_k(S)$}
Here we use Leibman's pointwise convergence result on nilmanifold to show that the averages converges if all the functions belong to the appropriate Host-Kra-Ziegler factors.
\begin{theorem} [Leibman, \cite{Leibman}]\label{LeibmanThm}
	Let $p(n)$ be a polynomial sequence in a nilpotent Lie group $G$. For any $x \in X = G/\Gamma$ for some discrete compact subgroup $\Gamma$, $F \in \mathcal{C}(X)$, 
	\[\lim_{N \to \infty} \frac{1}{N} \sum_{n=1}^N F(p(n)x)\]
	exists.
\end{theorem}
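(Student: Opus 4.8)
The plan is to deduce the claimed convergence from the equidistribution theory of polynomial orbits on nilmanifolds, so the real content is to reduce the general polynomial sequence $p(n)$ to the case where the orbit is equidistributed in a sub-nilmanifold. First I would reduce to the case where $G$ is connected and simply connected: replacing $G$ by the identity component and passing to a quotient if necessary, one may assume $G$ is a connected, simply connected nilpotent Lie group and $\Gamma$ is a uniform discrete subgroup, so $X = G/\Gamma$ is a compact nilmanifold. A polynomial sequence $p:\mathbb{Z}\to G$ (a product of the form $g_1^{q_1(n)}\cdots g_r^{q_r(n)}$ with $q_i\in\mathbb{Z}[n]$, or more intrinsically a sequence whose successive ``derivatives'' eventually land in the identity) gives rise to the orbit $(p(n)x)_n$ in $X$. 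The key structural input is Leibman's own equidistribution theorem: the closure of $\{p(n)x : n\in\mathbb{Z}\}$ is, up to translation, a finite union of sub-nilmanifolds $Y_j = H_j/(H_j\cap \Gamma^{x})$ (with $\Gamma^x$ the stabilizer of $x$), and along each residue class $n\equiv j \pmod{m}$ the sequence $p(n)x$ is equidistributed with respect to the Haar measure $\mu_{Y_j}$ on $Y_j$.

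Granting that structural result, the proof of convergence of the Cesàro averages is routine: split $\{1,\dots,N\}$ into residue classes mod $m$, and on each class use equidistribution to conclude
\[
\lim_{N\to\infty}\frac{1}{N}\sum_{\substack{n\le N\\ n\equiv j\,(m)}} F(p(n)x) = \frac{1}{m}\int_{Y_j} F\, d\mu_{Y_j},
\]
for every $F\in\mathcal{C}(X)$, since continuous functions are exactly the test class for which weak-$*$ convergence of empirical measures gives convergence of averages. Summing over $j=0,\dots,m-1$ yields the existence of $\lim_N \frac1N\sum_{n=1}^N F(p(n)x)$, with limit $\frac1m\sum_j \int_{Y_j}F\,d\mu_{Y_j}$. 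The passage from ``$G$ connected simply connected'' back to a general nilpotent Lie group $G$ is handled by noting that $G/G_0$ is finite (for $X$ compact) so one further decomposes into finitely many arithmetic progressions, on each of which the orbit stays in a single connected-component coset, reducing to the already-treated case.

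The main obstacle is the equidistribution theorem itself — i.e., proving that the orbit closure of a polynomial sequence in a nilmanifold is a finite union of sub-nilmanifolds and that the sequence equidistributes on it along progressions. This is genuinely the hard theorem of \cite{Leibman}; its proof goes by induction on the step of the nilpotent group (or on $\dim G$), projecting to the maximal torus factor $G/([G,G]G_0\Gamma)$ where Weyl's polynomial equidistribution theorem applies, and then lifting the information back up through the central extension using a van der Corput / difference argument on the ``vertical'' torus together with the inductive hypothesis. For the purposes of the present paper, however, this is precisely the point at which I would simply cite Theorem~\ref{LeibmanThm} as a black box: the statement as displayed is exactly what is needed downstream, namely that $\frac1N\sum_{n=1}^N F(p(n)x)$ converges for every continuous $F$ on the nilmanifold, which is what will be applied to the nilsequences arising from the Host--Kra--Ziegler factors $\mathcal{Z}_k(S)$ in the proof of Theorem~\ref{mainThm}(c).
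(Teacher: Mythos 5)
Your treatment matches the paper exactly: Theorem~\ref{LeibmanThm} is stated there purely as a cited result of Leibman \cite{Leibman}, with no proof given, and is used as a black box in the proof of Theorem~\ref{mainThm}(c). Your sketch of the underlying equidistribution argument (orbit closure as a finite union of sub-nilmanifolds, equidistribution along arithmetic progressions, induction on the step via the maximal torus factor and Weyl) is a fair summary of Leibman's proof, but, as you note, citing the theorem is all that is needed here.
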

Using this result, we are now ready to prove the last piece of the main theorem.
\begin{proof}[Proof of (c) of Theorem $\ref{mainThm}$.]	
	With appropriate factors maps, we assume $(X, \mathcal{F}, \mu, T)$ and $(Y, \mathcal{G}, \nu, S)$ to be nilsystems, i.e. $X = G_1/\Gamma_1$, and $Y = G_2/\Gamma_2$, where $G_1$ is a $(k+1)$-step nilpotent Lie group, $G_2$ is a $k$-step nilpotent Lie group , and $\Gamma_1$ and $\Gamma_2$ are discrete co-compact subgroups of $G_1$ and $G_2,$ respectively. In this proof, we will assume that $f_1, f_2 \in \mathcal{C}(X)$, and $g_1, \ldots, g_k \in \mathcal{C}(Y)$. By taking the product of $X^2$ and $Y^k$, we would have another nilmanifold:
	\[ X^2 \times Y^k = (G_1 / \Gamma_1)^2 \times (G_2 / \Gamma_2)^k \cong (G_1^2 \times G_2^k)/(\Gamma_1^2 \times \Gamma_2^k). \]
	
	Let $\tau \in G_1$ such that the action of $\tau$ on an element of $X$ is determined to be $\tau \cdot x = Tx$. Similarly, we define $\sigma \in G_2$ so that $\sigma \cdot y = Sy$. We define a polynomial sequence $p$ on $X^2 \times Y^k$ as follows:
	\[ p(n) = (\tau^{an}, \tau^{bn}, \sigma^n, \sigma^{2n}, \ldots, \sigma^{kn}). \]
	Clearly, $p(n) \in G_1^2 \times G_2^k$ for all $n \in \ZZ$, and it acts on $X^2 \times Y^k$ in a way that
	\[ p(n) \cdot (x_1, x_2, y_1, \ldots, y_k) = (T^{an}x_1, T^{bn}x_2, S^ny_1, \ldots, S^{kn}y_k). \]

	Define a continuous function $F \in \mathcal{C}(X^2 \times Y^k)$ such that
	\[ F(x_1, x_2, y_1, \ldots, y_k) = f_1(x_1)f_2(x_2) \prod_{j=1}^k g_j(y_j).\]
	Theorem $\ref{LeibmanThm}$ tells us that the averages
	\[ \frac{1}{N} \sum_{n=1}^N F(p(n)\cdot(x_1, x_2, y_1, \ldots, y_k)) = \frac{1}{N} \sum_{n=1}^N f_1(T^{an}x_1)f_2(T^{bn}x_2)\prod_{j=1}^k g_j(S^{jn}y_j)\]
	converge for all $(x_1, x_2, y_1, \ldots, y_k) \in X^2 \times Y^k$. So in particular, if the averages were taken a point $(x, x, y, \ldots, y) \in X^2 \times Y^k$ for any $x \in X$ and $y \in Y$, the desired convergence result holds.
	
	By a standard approximation argument, we can extend this result for the case $f_1, f_2 \in L^\infty(\mu) \cap \mathcal{Z}_{k+1}(T)$ and $g_1, \ldots, g_k \in L^\infty(\nu) \cap \mathcal{Z}_{k}(S)$. In this process, we neglect a null-set for which the averages may not converge, which allows us to obtain a set of full-measure $X_k' \subset X$ that satisfies (c) of Theorem $\ref{mainThm}$. 
\end{proof}

\section*{Remarks}
\begin{enumerate}
\item Recently, the first author announced in \cite{AssaniDRnil} that the Wiener-Wintner result obtained in \cite{WWDR} can be extended to a nilsequence Wiener-Wintner result, providing a positive answer to the question raised by B. Weiss in 2014 Ergodic Theory Workshop at UNC Chapel Hill. A similar result was also recently announced by P. Zorin-Kranich \cite{ZK_NilseqWW}.

\item {Recently, we have extended Theorem \ref{mainResult} so that the sequence $a_n = f_1(T^{an}x)f_2(T^{bn}x)$ is $\mu$-a.e. a good universal weight for multiple recurrent averages with commuting transformations. More precisely, we have shown the following:
\begin{theorem*}[{\cite[Theorem 1.1]{CommNorm}}]
	Let $(X, \mathcal{F}, \mu, T)$ be a measure-preserving system, and suppose $f_1, f_2 \in L^\infty(\mu)$. Then there exists a set of full-measure $X_{f_1, f_2}$ such that for any $x \in X_{f_1, f_2}$, for any $a, b \in \ZZ$ and any positive integer $k \geq 1$, for any other measure-preserving system with $k$ commuting transformations $(Y, \mathcal{G}, \nu, S_1, S_2, \ldots S_k)$, and for any $g_1, g_2, \ldots g_k \in L^\infty(\nu)$, the averages
	\begin{equation*} \frac{1}{N} \sum_{n=0}^{N-1} f_1(T^{an}x)f_2(T^{bn}x)\prod_{i=1}^k g_i \circ S_i^n \text{ converge in } L^2(\nu).\end{equation*}
\end{theorem*}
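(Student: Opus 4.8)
The plan is to transplant the architecture of the proof of Theorem \ref{mainResult} almost verbatim, localizing the single place where the one transformation $S$ is replaced by commuting transformations $S_1,\dots,S_k$. First I would reduce to the case where $(X,\mathcal F,\mu,T)$ is ergodic by applying the ergodic decomposition to $(X,T)$ (no hypothesis is needed on $(Y,\mathcal G,\nu,S_1,\dots,S_k)$, since we only want $L^2(\nu)$-convergence). Then, for a fixed $k\geq 1$, I would write $f_i=\expec(f_i\mid\mathcal Z_{k+1}(T))+f_i'$ with $f_i'\in\mathcal Z_{k+1}(T)^\perp$, expand the bilinear expression in $(f_1,f_2)$ into four pieces exactly as in $(\ref{decomposition})$, and treat separately the three pieces containing a factor $f_i'$ and the remaining piece in which both functions lie in $\mathcal Z_{k+1}(T)$. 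The case $k=1$ is a single transformation, so it is the $k=1$ case of Theorem \ref{mainResult} (equivalently, the base case of the internal van der Corput induction below).

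For the three ``orthogonal'' pieces I would prove the analogue of Lemma \ref{dynEstMultLemma}: the averages converge to $0$ in $L^2(\nu)$ for $\mu$-a.e.\ $x$. The key observation is that the van der Corput argument of \S\ref{outside} uses only that each factor on the $Y$-side is composed with an \emph{invertible, measure-preserving} transformation and that the relevant transformations \emph{commute}: applying van der Corput in $h_1$ and then the substitution $S_1^{-n}$ inside $\int\,d\nu$ turns $\prod_{i=1}^k g_i\circ S_i^n$ into $(g_1\cdot\bar g_1\circ S_1^{h_1})\prod_{i=2}^k(g_i\cdot\bar g_i\circ S_i^{h_1})\circ(S_iS_1^{-1})^n$, and the $k-1$ transformations $S_iS_1^{-1}$ again commute and preserve $\nu$. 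Iterating $k-1$ times, and using H\"older/Cauchy--Schwarz to discard the $n$-independent factors at each stage, reduces everything to bounding $\sup_{t}\big|\frac1N\sum_n(\text{vdC-transformed weight})\,e^{2\pi i n t}\big|^2$; from this point on the argument is literally that of \S\ref{outside}, so Fatou's lemma, the double recurrence Wiener--Wintner estimate $(\ref{estimate})$, and the telescoping of the averaged seminorms $\vertiii{f_i\cdot f_i\circ T^h}_m$ into $\vertiii{f_i}_{k+2}$ force the limiting quantity to vanish, because one of $f_1',f_2'$ kills $\vertiii{\cdot}_{k+2}$. The only genuinely new bookkeeping is checking that the family of $Y$-transformations remains commuting, invertible, and measure-preserving through all $k-1$ reductions, which it does since $(S_iR^{-1})(S_jR^{-1})=(S_jR^{-1})(S_iR^{-1})$ whenever the $S$'s and $R$ commute.

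For the remaining piece, both $f_1,f_2\in\mathcal Z_{k+1}(T)$, and since $\mathcal Z_{k+1}(T)$ is an inverse limit of $(k+1)$-step nilsystems, for $\mu$-a.e.\ $x$ the weight $u_n=f_1(T^{an}x)f_2(T^{bn}x)$ is, up to an $L^2$-small error controllable uniformly via the double-recurrence nilsequence Wiener--Wintner estimate announced in \cite{AssaniDRnil}, a basic nilsequence --- the value along a linear orbit of a continuous function on a product nilmanifold. I would then invoke $L^2(\nu)$-convergence of nilsequence-weighted multiple ergodic averages for commuting transformations. One route mirrors parts (b)--(c) of Theorem \ref{mainThm}: decompose each $g_i$ along the box-seminorm characteristic factors of $(Y,S_1,\dots,S_k)$, show by iterated van der Corput on the $Y$-side that the averages vanish unless every $g_i$ lies in the corresponding factor, and then realize the surviving averages as orbit averages of a continuous function on a nilmanifold under a polynomial (here linear) action and apply Leibman's theorem (Theorem \ref{LeibmanThm}); an alternative is to cite the general norm-convergence theorem for commuting transformations and absorb the nilsequence weight into a polynomial action of a nilpotent group. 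Finally I would intersect the finitely many full-measure sets produced for each $k$ (one for each of the three ``orthogonal'' pieces and for the nil-piece) and then intersect over all $k$ to obtain the universal set $X_{f_1,f_2}$.

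I expect the main obstacle to be precisely the commuting $Y$-side in the last step: unlike the single-transformation case, the characteristic factor of $\frac1N\sum\prod g_i\circ S_i^n$ is not simply a nilsystem, so making a Leibman-style argument work requires either the structure theory for commuting actions (Host's box seminorms together with the structure theorem for the relevant factor) or the general norm-convergence result for commuting transformations used as a black box; reconciling the nilsequence weight $u_n$ with that structure --- and verifying that the box-seminorm reductions interact correctly with the $X$-side factor $\mathcal Z_{k+1}(T)$, so that the same level $k+1$ governs both the $k-1$ van der Corput steps and the final nilmanifold orbit argument --- is where the real work lies. The van der Corput reductions of the second step, though routine, also need care to confirm that no transformation becomes non-invertible and that commutativity is genuinely preserved at each stage.
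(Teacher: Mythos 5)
First, a point of orientation: the statement you were asked to prove is not proved in this paper at all --- it is quoted in the Remarks from \cite{CommNorm} --- so there is no in-paper proof to compare against; what follows assesses your proposal on its own terms. Your treatment of the ``orthogonal'' pieces is essentially sound and is a faithful transplant of \S\ref{outside}: van der Corput in $h$ followed by composing the integrand with $S_1^{-n}$ turns $\prod_{i=1}^k g_i\circ S_i^n$ into an average over the $k-1$ commuting measure-preserving transformations $S_iS_1^{-1}$, the weight transforms into $F_{1,\vec h(1)}(T^{an}x)F_{2,\vec h(1)}(T^{a_2n}x)$ exactly as in Lemma \ref{dynEstMultLemma}, and after $k-1$ such steps the spectral theorem, Fatou's lemma and the estimate $(\ref{estimate})$ force the limit to vanish when $f_1'$ or $f_2'$ lies in $\mathcal{Z}_{k+1}(T)^\perp$ (modulo routine care about invertibility of the $S_i$, degenerate quotients $S_iS_1^{-1}=\Id$, and conjugates).

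The genuine gap is the structured piece, and your own hedging locates it correctly. The endgame of the present paper --- push each $g_i$ down to $\mathcal{Z}_k(S)$ via Proposition \ref{sequence} and then apply Theorem \ref{LeibmanThm} on a product nilmanifold --- does not transplant to commuting $S_1,\dots,S_k$: the characteristic factors of $\frac1N\sum_n\prod_i g_i\circ S_i^n$ are governed by box seminorms on a magic (or pleasant) extension, and these factors are \emph{not} inverse limits of nilsystems, so there is no analogue of the passage ``all $g_i$ structured $\Rightarrow$ nilsystem $\Rightarrow$ Leibman.'' Your fallback, citing the general norm-convergence theorem for commuting transformations and ``absorbing the nilsequence weight into a polynomial nilpotent action,'' also does not go through as stated: the weight $u_n$ is the evaluation of a continuous function along the orbit of a \emph{fixed} point of a nilmanifold, not an $L^2$ element of an enlarged commuting system, so norm convergence for the enlarged system says nothing about the weighted averages without a further argument (equidistribution/unique ergodicity of the nilorbit, or a Host--Kra/Chu-type criterion showing that nilsequences are good universal weights for commuting averages in norm). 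Proving precisely that --- that the double-recurrence weight, or the nilsequence approximating it, is a good weight for commuting multiple averages --- is the actual content of \cite{CommNorm}; your proposal invokes it rather than proves it, so as written the argument for the structured case is incomplete. A complete proof would have to supply the box-seminorm estimate for the weighted commuting averages and a genuine treatment of the structured component on the magic extension (or an equivalent criterion), not merely the two reductions you carry over from Theorem \ref{mainThm}(a) and (b).
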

In other words, in terms of Definition $\ref{guweight}$, we have shown that for $\mu$-a.e. $x \in X$, the sequence $(f_1(T^{an}x)f_2(T^{bn}x))_n$ is a good universal weight for the process $(X_n)_n$ for norm convergence, where $(X_n)_n$ is of the form
\[ X_n = \prod_{i=1}^k g_i \circ S_i^n, \]
where $g_1, g_2, \ldots, g_k \in L^\infty(\nu)$ for any measure-preserving system with commuting transformations $(Y, \mathcal{G}, \nu, S_1, S_2, \ldots, S_k)$, for any positive integer $k \geq 1$.
}
\end{enumerate}

\bibliographystyle{plain}
\bibliography{RM_Bib}

\end{document}